\newtheorem{thm}{Theorem}
\newcommand{\md}{\:\mbox{d}}
\newcommand{\FM}{\mbox{\tiny{FM}}}
\newcommand{\CR}{\mbox{\tiny{CR}}}
\newcommand{\pf}{\noindent{\textbf{Proof.}}\quad}
\begin{document}

\title{Optimal estimation for the Fujino--Morley interpolation error constants}
%\subtitle{Do you have a subtitle?\\ If so, write it here}

%\titlerunning{Short form of title}        % if too long for running head

\author{Shih-Kang Liao, Yu-Chen Shu, Xuefeng Liu}

%\authorrunning{Short form of author list} % if too long for running head

\institute{Shih-Kang Liao \at Graduate School of Science and Technology, Niigata University, Japan; Department of Applied
Mathematics, National Cheng Kung University, Tainan, Taiwan \\
              \email{L18021010@mail.ncku.edu.tw}           %  \\
           \and \\
           Yu-Chen Shu\at Department of Mathematics, National Cheng Kung University, Tainan, Taiwan \\
            \email{ycshu@mail.ncku.edu.tw}
            \and \\
            Xuefeng Liu (corresponding author) \at Graduate School of Science and Technology, Niigata University, Japan \\
            \email{xfliu@math.sc.niigata-u.ac.jp}
}

\date{Received: date / Accepted: date}
% The correct dates will be entered by the editor

\maketitle

\begin{abstract}
%Insert your abstract here. Include keywords, PACS and mathematical
%subject classification numbers as needed.
    The quantitative estimation for the interpolation error constants of the Fujino--Morley interpolation operator is considered.
    To give concrete upper bounds for the constants, which is reduced to the problem of 
    providing lower bounds for eigenvalues of bi-harmonic operators, 
    a new algorithm based on the finite element method along with verified computation is proposed. 
    In addition, the quantitative analysis for the variation of eigenvalues upon the perturbation of the shape of triangles is provided.
    Particularly, for triangles with longest edge length less than one, the optimal estimation for the constants is provided.
    An online demo with source codes of the constants calculation is available at 
    \url{http://www.xfliu.org/onlinelab/}.
\keywords{Fujino--Morley interpolation operator\and finite element method\and verified computing\and eigenvalue problem}
% \PACS{PACS code1 \and PACS code2 \and more}
% \subclass{MSC code1 \and MSC code2 \and more}
\end{abstract}

\section{Introduction}
\label{header-n457}
 The Fujino--Morley\footnote{The element discussed here is often called by ``Morley element" in the existing literature. However, the same element is also proposed independently by T. Fujino in P.739 of \cite{Fujino1971} (proceedings of a conference on 1969), which is also cited by L.S.D. Morley in \cite{Morley1971}.}  finite element method (FEM) \cite{Fujino1971,Morley1968,Morley1971} provides a robust way to solve partial differential problems evolving bi-harmonic operators.
 Especially, in solving the eigenvalue problem of bi-harmonic operators, the Fujino--Morley FEM along with the Fujino--Morley interpolation operator 
 can be utilized to find explicit lower bound for the eigenvalues; see the work of Carstensen--Gallistl \cite{Carstensen2013} and Liu \cite{Liu2015AMC}.
 For the Fujino--Morley interpolation, there are two fundamental constants that are playing important roles in bounding the eigenvalues. 
 In \cite{liu-you-amc-2018}, such constants are used to estimate the interpolation error constant for the quadratic Lagrange interpolation operator.
 Rough bounds of the two constants have been given in \cite{Carstensen2013} by using theoretical analysis.
In this paper, we will propose a FEM based method to provide the optimal estimation of the constants. 
 \\
 
  Let \(K\) be a triangle element with the largest edge length as $h$. The vertices of $K$ are denoted by \(O\), \(A\) and \(B\) and
  the edges by $e_1, e_2, e_3$; see Fig. \ref{fig:triangle}.

 \begin{figure}[htbp]
 \begin{center}
 \includegraphics[scale=0.18]{./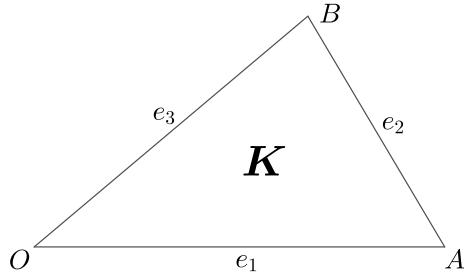}
 \caption{\label{fig:triangle}Triangle element $K$}
 \end{center}
 \end{figure}  

  Over $K$, we define the Fujino--Morley interpolation operator  $\Pi^{\FM}$. 
  For given $u\in H^2(K)$, $\Pi^{\FM}u$ is a quadratic polynomial that satisfies
  \begin{eqnarray}
    &&  (\Pi^{\FM}u -u)(P)=0, \quad P=O,A,B;\quad \\
    &&  \int_{e_i} \frac{\partial }{\partial n}(\Pi^{\FM}u -u) \md s=0, \quad i=1,2,3\:.  
  \end{eqnarray}
  
  For such an interpolation operator, the following error estimation is needed in numerical analysis, especially in solving the problem of bounding eigenvalues for bi-harmonic operators \cite{Carstensen2013,Liu2015AMC}; see, also, Theorem \ref{Liu_thm} for details of lower eigenvalue bounds evaluation:
  $$
  \|\Pi^{\FM}u -u\|_K \le C_0(K) |\Pi^{\FM}u -u|_{2,K},\quad
  |\Pi^{\FM}u -u|_{1,K} \le C_1(K) |\Pi^{\FM}u -u|_{2,K}\:.
  $$
The definition of constants $C_0(K)$ and $C_1(K)$ can be found in (\ref{eq:def-constants-0-1}).
In \cite{carstensen2014guaranteed,Carstensen2013}, to bound $C_1(K)$, the Crouzeix-Raviart constant $C_{\CR}$ is introduced. Using the fact $C_1(K)\le C_{\CR}(K)$, the rough bounds for the constants are given by
\begin{equation}
\label{eq:constant_est_existing}
 C_0(K) \le 0.2575 h^2 , \quad C_1(K) \le C_{\CR}(K)\le 0.2983 h\:.
\end{equation}
In \cite{Liu2015AMC}, the optimal estimation of $C_{\CR}(K)$ is given as
\begin{equation}
\label{eq:constant_est_existing_liu}
(C_1(K) \le) C_{\CR}(K)\le 0.1893h \:.
\end{equation}
%
% [Why explicit bound for the constants are needed. Keyword: Verified computing, solution verification, eigenvalue bounds. ]
% 
%
 In this paper, we provide an algorithm to evaluate the constants directly and verify that the following estimation of the above two constants holds for element $K$ of arbitrary shapes.
 $$
 0.07349h^2\le C_0(K) \le 0.07353 h^2 ,\quad 0.18863 h \le C_1(K) \le 0.18868 h \:.
 $$
The lower bounds of these constants means there exists element $K$ such that $C_i(K)$ cannot be smaller than the lower bound provided here. To have rigorous computation results, the INTLAB toolbox 
of interval arithmetic \cite{Ru99a}  is adopted for the evaluation of the constants. 
Particularly, the rigorous eigenvalue estimation for matrices is based on the algorithm of Behnke \cite{behnke1991calculation}.\\

The method to be proposed in this paper for estimating constant $C_0$ and $C_1$ can also be used to estimate the constants of other interpolation operators. For example, let 
$\Pi$ be the Lagrange interpolation operator or the Fujino--Morley interpolation operator defined over triangle $K$, the following interpolation error estimation holds.
   $$
  \|\Pi u -u\|_{0,K} \le d_0 |u|_{2,K},\quad
  |\Pi u -u|_{1,K} \le d_1 |u|_{2,K}\:,
  $$
In \S\ref{header-n842}, sharp evaluation of constants $d_0$ and $d_1$ is provided; see detailed results in Table \ref{tab:LB_1} and Table \ref{tab:LB_2}. \\

 The rest of this paper is arranged as follows.  In \S\ref{header-n471}, constants $C_0$ and $C_1$ and the function space setting are introduced. 
 The method to solve eigenvalue problems corresponding to constants is presented in \S\ref{header-n658}. 
 The theoretical analysis about the perturbation of eigenvalues on element shape variation is performed in \S\ref{header-n516}. 
 In \S\ref{header-n753}, the algorithm to bound the constants for elements of arbitrary shapes is proposed and the optimal estimation is obtained. 
 In \S\ref{header-n842}, the optimal bound for $C_0$ and $C_1$ is applied in bounding error constants for other interpolation operators.

\section{Preliminary}\label{header-n471}
\paragraph{Function spaces} The standard notation for Sobolev space is used in this paper. 
That is, $\|\cdot \|_{\Omega}$ denotes the $L^2$ norm for $L^2$ space; $|\cdot|_{k,\Omega}$ $(k=1,2,\cdots)$ denotes the $k$th order semi-norm for functions in 
$H^k(\Omega)$. In many cases, the subscript $\Omega$ will be omitted if the domain is self-evident. 
The gradient operator is denoted by $\nabla$ and the second order derivative is given by $D^2u := (u_{xx}, u_{xy}, u_{yx},u_{yy})$ for $u\in H^2(\Omega)$.
The inner product of $L^2(\Omega)$ or $\left(L^2(\Omega)\right)^2$ is denoted by $(\cdot, \cdot)$.

\paragraph{Definition of constants}
To give the definition of the interpolation constants $C_0$ and $C_1$, 
let us introduce the kernel space of $\Pi^{\FM}$,,denoted by $V^{\FM}(K)$, when the operator is applied to a triangle element $K$.  That is,
  % the member function of which has both the the value on  each vertex of \(K\) and the integration of normal  derivative along each edge being zero. That is,
%\begin{eqnarray}
\begin{equation}
\label{def:V-FM}
V^{\FM}(K):=\{u\in H^2(K) \:|\: u(O)=u(A)=u(B)=0,
\int_{e_i}\frac{\partial u}{\partial n}ds=0,~i=1,2,3 \}\:.
\end{equation}
%\end{eqnarray}
Over space $V^{\FM}(K)$, the constants are defined by using the Rayleigh quotient.
\begin{equation}
\label{eq:def-constants-0-1}
     C_0(K):=\sup_{\scriptsize{\begin{array}{c}u\in V^{\FM}(K)\\D^2u\neq0\end{array}}}\frac{\|u\|_K}{\|D^2u\|_K}, ~~~
     C_1(K):=\sup_{\scriptsize{\begin{array}{c}u\in V^{\FM}(K)\\D^2u\neq0\end{array}}}\frac{\|\nabla u\|_K}{\|D^2u\|_K}\:.  
\end{equation}
Given a reference triangle $K$ with $\text{diam}(K)=1$, let $K_h$ be the triangle obtained through scaling $K$ by $h$ times, that is, $\text{diam}(K_h)=h$. It is easy to see that 
$C_0(K_h)=h^2C_0(K)$, $C_1(K_h)=hC_1(K)$. \\

Below, we also introduce the Crouzeix-Raviart interpolation constant, which will help to find the optimal estimation of the constant $C_1$.

\paragraph{Crouzeix-Raviart interpolation constant}

  Given $u\in H^1(K)$, the Crouzeix-Raviart interpolation $\Pi^{\CR}u$ is a linear polynomial over $K$ such that
    \begin{equation}
    \int_{e_i} (\Pi^{\CR}u - u ) \md s =0, \quad i=1,2,3.
    \end{equation}
    The Crouzeix-Raviart constant associated to $\Pi^{\CR}$ is defined as follows,
  \begin{equation}
  \label{eq:def-CR-constant} 
  C_{\CR}(K)=\sup_{\scriptsize{\begin{array}{c}
      u\in H^1(K)   \\
      \nabla (u -\Pi^{^{\CR}}u)\neq0  
  \end{array}}}\frac{\|u-\Pi^{^{\CR}}u \|}{\|\nabla (u -\Pi^{^{\CR}}u) \|} = \sup_{\scriptsize{\begin{array}{c}
       u\in V^{\CR}(K)  \\
       \nabla u\neq0 
  \end{array}}}\frac{\|u\|}{\|\nabla u\|} \:,
  \end{equation}
  where
  $$
    V^{\CR}(K):=\{u \in H^1(K)\:|\: \int_{e_i}u \md s=0, i=1,2,3\}\:. 
  $$
  The Crouzeix-Raviart interpolation constant $C_{\CR}$ is well investigated in \cite{Liu2015AMC}:
  \begin{enumerate}
  \item [a)] When vertices $O$ and $A$ of $K$ are fixed, the value of \(C_{\CR}(K)\) has monotonicity upon the \(y-\)coordinate of \(B\).
  \item [b)] 
  For all triangles with diameter less than $1$, the maximum value of $C_{\CR}(K)$ has a rigorous bound as follows, % is verified to be enclosed in $[0.1890,0.1893]$. That is
  $$
    \max_{\mbox{\footnotesize{diam}}(K)\le 1} C_{\CR}(K) \in [0.1890,0.1893] \:.
  $$
  \item [c)] Numerical computation implies the maximum value is achieved when \(K\) is a regular triangle. 
  \end{enumerate}

\iffalse  
  And
  \(C_{\CR}(K)\leq1.888\) for \(B=(\cos\theta,\sin\theta)\), for
  \(\theta\) in the interval \((0,0.98\pi]\). For the furthermore
  details, we can reference to Liu{[}2015{]}. From the following
  theorem, we can see there is order relationship between \(C_1(K)\) and
  \(C_{\CR}(K)\).

\fi

For the relation between $C_1(K)$ and $C_{\CR}(K)$, we have the following lemma (see, e.g., \cite{Carstensen2013}).
  
\begin{lemma}\label{lemma1}
 \(C_1(K)\leq C_{\CR}(K)\) for all triangle $K$.
\end{lemma}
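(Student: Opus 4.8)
The plan is to show that every partial derivative of a function $u\in V^{\FM}(K)$ lands in the Crouzeix--Raviart kernel space $V^{\CR}(K)$, and then to transfer the estimate defining $C_{\CR}(K)$ componentwise. Fix $u\in V^{\FM}(K)$ with $D^2u\neq 0$. Since $u\in H^2(K)$, both $\partial_x u$ and $\partial_y u$ belong to $H^1(K)$, so the only thing to verify is the edge-mean condition $\int_{e_i}\partial_j u\,\md s=0$ for $i=1,2,3$ and $j=x,y$.

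The key observation is this. On each edge $e_i$, let $n_i$ and $t_i$ denote the (constant) unit normal and unit tangent, so that $\nabla u=(\nabla u\cdot n_i)\,n_i+(\nabla u\cdot t_i)\,t_i$ pointwise on $e_i$. Integrating this identity over $e_i$ gives
\begin{equation}
\int_{e_i}\nabla u\,\md s=\Bigl(\int_{e_i}\tfrac{\partial u}{\partial n}\,\md s\Bigr)n_i+\Bigl(\int_{e_i}\tfrac{\partial u}{\partial t}\,\md s\Bigr)t_i .
\end{equation}
The first coefficient vanishes by the definition of $V^{\FM}(K)$. For the second, the trace $u|_{e_i}$ lies in $H^{3/2}(e_i)$, hence the fundamental theorem of calculus applies along $e_i$ and $\int_{e_i}\frac{\partial u}{\partial t}\,\md s$ equals the difference of the values of $u$ at the two endpoints of $e_i$; since $u$ vanishes at $O$, $A$, $B$, this difference is zero. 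Therefore $\int_{e_i}\nabla u\,\md s=0$ as a vector, i.e. $\int_{e_i}\partial_x u\,\md s=\int_{e_i}\partial_y u\,\md s=0$, so $\partial_x u,\partial_y u\in V^{\CR}(K)$.

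Now apply the defining inequality of $C_{\CR}(K)$ in (\ref{eq:def-CR-constant}) to $\partial_x u$ and to $\partial_y u$: $\|\partial_j u\|\le C_{\CR}(K)\,\|\nabla\partial_j u\|$ for $j=x,y$ (this remains valid in the degenerate case $\nabla\partial_j u=0$, since then $\partial_j u$ is constant on $K$ with zero edge mean, hence $\partial_j u\equiv 0$). Squaring and adding, and using $u_{yx}=u_{xy}$ for $u\in H^2(K)$,
\begin{equation}
\|\nabla u\|_K^2=\|\partial_x u\|_K^2+\|\partial_y u\|_K^2\le C_{\CR}(K)^2\bigl(\|\nabla\partial_x u\|_K^2+\|\nabla\partial_y u\|_K^2\bigr)=C_{\CR}(K)^2\,\|D^2u\|_K^2 ,
\end{equation}
where the last equality uses the four-component convention $D^2u=(u_{xx},u_{xy},u_{yx},u_{yy})$. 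Hence $\|\nabla u\|_K\le C_{\CR}(K)\,\|D^2u\|_K$; taking the supremum over $u\in V^{\FM}(K)$ with $D^2u\neq 0$ yields $C_1(K)\le C_{\CR}(K)$.

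I expect the only delicate point to be the trace/regularity justification that $\int_{e_i}\frac{\partial u}{\partial t}\,\md s$ telescopes to the difference of vertex values — this is where the hypothesis $u(O)=u(A)=u(B)=0$ is actually used, and it should be handled by a standard density argument ($C^\infty(\overline K)$ is dense in $H^2(K)$, on which the identity is elementary, and both sides are continuous in the $H^2$ norm). Everything else is bookkeeping with the orthonormal frame $(n_i,t_i)$ on each edge.
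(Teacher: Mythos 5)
Your proof is correct and follows exactly the paper's route: the paper's (one-line) proof likewise observes that $u_x,u_y\in V^{\CR}(K)$ for $u\in V^{\FM}(K)$ and sums the squared Crouzeix--Raviart inequalities componentwise. You have simply filled in the details the paper leaves implicit, namely the normal/tangential decomposition on each edge showing why the edge means of $u_x$ and $u_y$ vanish.
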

\pf One can draw the conclusion by noticing that $u_x , u_y \in V^{\CR}(K)$ for any \(u \in V^{\FM}(K)\) and 
the inequalities $\|u_x\|\leq C_{\CR}(K)\|\nabla u_x\|$, $\|u_y\|\leq C_{\CR}(K)\|\nabla u_y\|$.\\

The task to obtain the optimal estimation of constants can be divided into two steps.

  \begin{enumerate} 
  \item [Step 1.] Direct evaluation for \(C_0\) and \(C_1\) for several sample triangles by
    solving the corresponding eigenvalue problem along with the Fujino-Morley FEM; see the detail in \S \ref{header-n658}.
  \item [Step 2.] Perturbation analysis of constant \(C_0\) and \(C_1\) upon the
    change of shape of triangle element, where the monotonicity of constant \(C_0\) and \(C_{\CR}\) will play an important role; see the detail in \S \ref{header-n516} and \S \ref{header-n753}. \\
  \end{enumerate}

For the purpose of simplicity, the vertices of $K$ are located at $O(0,0)$, $A(1,0)$, $B(a,b)$, while $|OB|\le 1$ and $|AB|\le 1$.
Also, due to the symmetry of the position of $B$, only the case that $a\ge 1/2$ will be considered. As a summary, we will focus on  
triangles with $B$ inside the following area $\Omega$ (see Fig. \ref{fig:Omega}): 
$$
\Omega=\{(a,b)\in\mathbb R^2~|~a^2+b^2\le 1,~a\ge 1/2,~b>0\}\:.
$$
  
 \begin{figure}[htbp]
 \begin{center}
 \includegraphics[scale=0.3]{./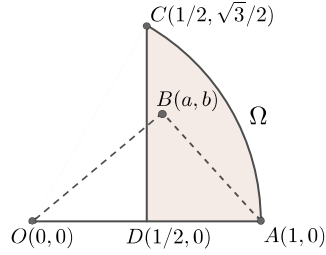}
 \caption{\label{fig:Omega}Range of vertex $B$}
 \end{center}
 \end{figure}

\section{Point-wise estimation of constants \texorpdfstring{\(C_0(K)\)}{C0(K)} and \texorpdfstring{\(C_1(K)\)}{C1(K)}} \label{header-n658}

In this section, let us describe the algorithm to estimate constants $C_0(K)$ and $C_1(K)$.
The interpolation constants are determined by solving eigenvalue problems of bi-harmonic operators, and their upper bounds will be evaluated by applying the Fujino-Morley FEM. 
The lower bounds of the interpolation constants will be discussed in \S\ref{header-n843}, where the conforming polynomial spaces are used.

The constants $C_0$ and $C_1$ are corresponding to the eigenvalues of the following eigenvalue problems.
\paragraph{Problem a)} Find $u \in V^{\FM}(K)(\subset H^2(K))$ and $\lambda>0$ such that

\begin{equation}
(D^2 u, D^2 v)  = \lambda (u,v)\quad \forall v\in V^{\FM}(K)
\end{equation}

\paragraph{Problem b)} Find $u \in V^{\FM}(K)(\subset H^2(K))$ and $\mu >0$ such that

\begin{equation}
(D^2 u, D^2 v)  = \mu (\nabla u,\nabla v)\quad \forall v \in V^{\FM}(K)
\end{equation}

The distribution of eigenpairs of Problem a) and Problem b) has been well investigated under
the theories for compact self-adjoint differential operators; see, e.g., \cite{Barbuska1991}. 
Let $\lambda_1$ and $\mu_1$ be the smallest eigenvalue of Problem a) and b), respectively.
Then, it is easy to see that 
$$
C_0=\sqrt{\lambda_1}^{-1}, \quad 
C_1=\sqrt{\mu_1}^{-1}.
$$

To give explicit bound of $\lambda_1$ and $\mu_1$, let introduce several finite element spaces.
Let $\mathcal{T}^h$ be a proper triangulation of domain $K$. A general Fujino--Morley finite element space $V^h$ has the member function $u_h$ with the following properties.

\begin{enumerate}
\item [a)] $u_h$  is piece-wise quadratic polynomial on each element of $\mathcal{T}^h$;
\item [b)] $u_h$ is continuous on each vertex;
\item [c)] $\displaystyle{\int_{e} \frac{\partial u}{\partial n} \md s}$ is continuous across each interior edge $e$.
\end{enumerate}
%
%\begin{eqnarray*}
%V^{h}(K)  :=\{u_h~|u_h \mbox{ is piecewise quadratic polynomial on each element of } K^h;\\
%                    u_h \mbox{ is continuous on each vertices; } \int_{e} \frac{\partial u}{\partial n}ds \mbox{ is continuous across interior edges} \}
%\end{eqnarray*}
To approximate $V^{\FM}$ (see definition at (\ref{def:V-FM})), let us define a subspace of $V^h$ by introducing boundary conditions.
\begin{eqnarray*}
V^{\FM,h}(K)  :=\{u_h\in V^h ~|~  u_h(O)=u_h(A)=u_h(B)=0;\\
\int_{e_i} \frac{\partial u}{\partial n}\md s=0 \text{ for each $e_i$ of $K$}\:. \}\:.
\end{eqnarray*}
\vspace{-1.5cm}
\begin{eqnarray}\label{FM_femspace}
     \hfill 
\end{eqnarray}
\\

\iffalse
Introduce the discrete bi-linear forms $M(u,v)$, $N_a(u,v)$ and $N_b(u,v)$ for $u,v\in V^h$.
\begin{align}
&M(u,v) = \sum_{T\in \mathcal{T}^h} \int_T D^2 u \cdot D^2 v \md T   \\
&N_a(u,v) = \sum_{T\in \mathcal{T}^h} \int_T u v \md T,\quad
N_b(u,v) = \sum_{T\in \mathcal{T}^h} \int_T \nabla u \cdot \nabla v \md T
\end{align}
\fi

Next, let us define the approximate eigenvalue problems over $V^{\FM,h}(K)$.
Since the function in $V^{\FM,h}(K)$ may not have continuity across interior edges, the differential operators $\nabla$ and $D^2$ are 
piece-wisely defined on the triangulation $\mathcal{T}^h$.

\paragraph{Problem a')} Find $u \in V^{\FM,h}(K)$ and $\lambda_h>0$ such that

\begin{equation}
(D^2 u, D^2 v)  = \lambda_h (u,v) \in V^{\FM,h}(K)  
\end{equation}
\paragraph{Problem b')} Find $u \in V^{\FM,h}(K)$ and $\mu_h >0$ such that
\begin{equation}
(D^2 u, D^2 v)  = \mu_h (\nabla u,\nabla v) \in V^{\FM,h}  
\end{equation}
Denote the smallest eigenvalue of a') and b') by $\lambda_{h,1}$ and $\mu_{h,1}$, respectively. \\

Below, we quote Theorem 2.1 of \cite{Liu2015AMC} for the purpose of bounding eigenvalues, 
where the spaces and bilinear forms are taken as follows.
$$
V(h):=\{u+u_h| u \in V^{\FM}, u_h \in V^{\FM, h}\}, \quad  M(u,v):=(D^2u,D^2v)
$$
$$
N(u,v) := (u, v) \mbox{ for } C_0, \quad  N(u,v) := (\nabla u, \nabla v) \mbox{ for } C_1 \:.
$$
Particularly, $M(u,v)$ is an inner product for $V(h)$.

\begin{thm}\label{Liu_thm}
Let \(P_h:V(h) \mapsto V^{\FM, h}\) be the projection with
respect to inner product $(D^2 \cdot, D^2\cdot)$, i.e., for any \(u\in V(h)\)
\begin{equation}\label{M-projection}
%M_h(u-P_h u, v_h) = 0, \quad \forall v_h\in V^h.
(D^2(u-P_h u), D^2 v_h) = 0, \quad \forall v_h\in V^h.
\end{equation}
Suppose there exist quantities \(C_{h,0}\) and \(C_{h,1}\) such that for any $u \in V(h)$
\begin{equation*}\label{M-projection-estimate}
\|u - P_h u\| \leq C_{h,0} \|D^2(u - P_h u) \|, \quad 
\|\nabla(u - P_h u)\| \leq C_{h,1} \|D^2(u - P_h u) \|\:.
\end{equation*}
\end{thm}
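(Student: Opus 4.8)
The statement of Theorem~\ref{Liu_thm} as excerpted is incomplete — it introduces the projection $P_h$ and hypotheses $C_{h,0}, C_{h,1}$ but the conclusion (presumably a lower bound for $\lambda_{h,1}$ or a two-sided bound relating $\lambda_1$ and $\lambda_{h,1}$) is cut off. Based on the structure of the paper and the referenced work of Liu, the conclusion should read: under the stated hypotheses, the smallest eigenvalue $\lambda_1$ of Problem a) satisfies
$$
\frac{1}{\lambda_1} \le \frac{1}{\lambda_{h,1}} + C_{h,0}^2,
$$
with the analogous estimate $1/\mu_1 \le 1/\mu_{h,1} + C_{h,1}^2$ for Problem b). I will sketch a proof of this form.

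\textbf{Approach.} The plan is to use the Lehmann--Goerisch / Liu--Oishi framework for guaranteed lower eigenvalue bounds via nonconforming FEM. First I would note that $M(\cdot,\cdot) = (D^2\cdot, D^2\cdot)$ is an inner product on the extended space $V(h)$, and that the Rayleigh quotient characterization gives $\lambda_1^{-1} = \sup_{u\in V(h), D^2u\neq 0} N(u,u)/M(u,u)$ for $N(u,v)=(u,v)$ (and likewise with $N(u,v)=(\nabla u,\nabla v)$ for $\mu_1$). The key algebraic step is the orthogonal decomposition $u = P_h u + (u - P_h u)$ with respect to $M$: for any $u \in V(h)$,
$$
N(u,u) = N(P_h u, P_h u) + 2 N(P_h u, u - P_h u) + N(u - P_h u, u - P_h u).
$$
I would then bound each term. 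The diagonal term $N(P_h u, P_h u)$ is controlled by $\lambda_{h,1}^{-1} M(P_h u, P_h u)$ since $P_h u \in V^{\FM,h}$; the term $N(u - P_h u, u - P_h u)$ is controlled by $C_{h,0}^2 \, M(u - P_h u, u - P_h u) = C_{h,0}^2 \, \|D^2(u-P_h u)\|^2$ by hypothesis; and the cross term is handled either by a Cauchy--Schwarz argument combined with Young's inequality, or — more sharply — by choosing the test function cleverly (taking $u$ to be an eigenfunction of the continuous problem, for which $P_h u - $ projection relations simplify the cross term). Combining these and using $M(P_h u, P_h u) + \|D^2(u-P_h u)\|^2 = M(u,u)$ yields the claimed resolvent-type inequality.

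\textbf{Key steps in order.} (1) Establish the Rayleigh quotient characterization of $\lambda_1^{-1}$ (resp. $\mu_1^{-1}$) over $V(h)$, using that $V^{\FM} \subset V(h)$ so the sup over $V(h)$ dominates the continuous one. (2) Fix $u \in V(h)$ achieving (or nearly achieving) the supremum — in the self-adjoint compact setting this is the continuous eigenfunction $u_1 \in V^{\FM}$. (3) Apply the $M$-orthogonal splitting $u = P_h u + w$ with $w = u - P_h u$, and use Pythagoras: $\|D^2 u\|^2 = \|D^2 P_h u\|^2 + \|D^2 w\|^2$. (4) Bound $N(P_h u, P_h u) \le \lambda_{h,1}^{-1}\|D^2 P_h u\|^2$ and $N(w,w) \le C_{h,0}^2 \|D^2 w\|^2$. (5) Dispose of the cross term $2N(P_h u, w)$: the cleanest route is to observe that when $u = u_1$ is the exact eigenfunction, $(u_1, v_h) = \lambda_1^{-1}(D^2 u_1, D^2 v_h)$ for all $v_h \in V^{\FM,h}$, which together with the definition of $P_h$ gives $(u_1, P_h u_1) = \lambda_1^{-1}(D^2 u_1, D^2 P_h u_1) = \lambda_1^{-1}\|D^2 P_h u_1\|^2$, and similarly one rewrites $N(P_h u_1, w)$ so that the cross term can be absorbed; alternatively bound $2N(P_h u, w) \le \epsilon N(P_h u, P_h u) \cdot(\text{const}) + \epsilon^{-1}(\cdots)$ and optimize. (6) Assemble the inequalities and divide through by $\|D^2 u\|^2$ to conclude.

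\textbf{Main obstacle.} The delicate point is the treatment of the cross term $2N(P_h u, u - P_h u)$, since $N$ (being an $L^2$ or $H^1$ inner product) is not orthogonal with respect to the $M$-splitting. Handling it without losing sharpness requires either the eigenfunction-specific identity in step~(5) — which is exactly where the compact self-adjoint structure and the Galerkin orthogonality $(D^2(u-P_h u), D^2 v_h)=0$ are used in tandem — or a careful min-max (Lehmann--Goerisch) argument. Getting a \emph{clean} resolvent inequality $1/\lambda_1 \le 1/\lambda_{h,1} + C_{h,0}^2$, rather than one with an unwanted coupling constant, is the crux, and it hinges on this cross-term cancellation; everything else (Pythagoras, the two one-sided bounds) is routine. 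I would also need to verify that $C_{h,1}$ (the $H^1$ analogue) and the corresponding argument for Problem b') go through verbatim with $N(u,v) = (\nabla u, \nabla v)$, which it does since the only property of $N$ used is bilinearity, symmetry, and nonnegativity together with the hypothesized interpolation estimate.
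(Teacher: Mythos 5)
First, a point of reference: the paper itself offers no proof of this statement --- it is quoted as Theorem 2.1 of \cite{Liu2015AMC}, and the conclusion you correctly reconstructed, $\lambda_1 \ge \lambda_{h,1}/(1+\lambda_{h,1}C_{h,0}^2)$ (equivalently $1/\lambda_1 \le 1/\lambda_{h,1}+C_{h,0}^2$), appears in the display immediately after the truncated theorem environment. So the comparison is with Liu's original argument. Your ingredients are the right ones: the $M$-orthogonal splitting $u = P_h u + w$ with $w=u-P_hu$, Pythagoras $\|D^2u\|^2=\|D^2P_hu\|^2+\|D^2w\|^2$, the discrete Rayleigh bound $N(P_hu,P_hu)\le \lambda_{h,1}^{-1}\|D^2P_hu\|^2$, and the hypothesis $N(w,w)\le C_{h,0}^2\|D^2w\|^2$. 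One slip in step (1): the supremum of $N(u,u)/M(u,u)$ over $V(h)$ is not \emph{equal} to $1/\lambda_1$; it only dominates it because $V^{\FM}\subset V(h)$. That inequality is all you need, but the argument should therefore be run for arbitrary $u$, not by asserting the supremum over $V(h)$ is attained at the continuous eigenfunction.

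The genuine gap is in your preferred treatment of the cross term. The identity $(u_1,v_h)=\lambda_1^{-1}(D^2u_1,D^2v_h)$ for $v_h\in V^{\FM,h}$ is false in general: the eigenfunction relation $(D^2u_1,D^2v)=\lambda_1(u_1,v)$ holds only for conforming test functions $v\in V^{\FM}$, and $V^{\FM,h}\not\subset V^{\FM}$ --- which is precisely the point of the theorem (see the remark following it: it ``does not require $V^h\subset V$''). So the ``cleanest route'' of your step (5) collapses. Your fallback (Cauchy--Schwarz plus Young with a parameter) does succeed, but only with the optimized parameter: from $2N(P_hu,w)\le tN(P_hu,P_hu)+t^{-1}N(w,w)$ with $t=C_{h,0}^2\lambda_{h,1}$ one obtains $N(u,u)\le\left(\lambda_{h,1}^{-1}+C_{h,0}^2\right)\left(M(P_hu,P_hu)+M(w,w)\right)=\left(\lambda_{h,1}^{-1}+C_{h,0}^2\right)M(u,u)$, which is the claim. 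The standard packaging of exactly this computation --- and the one in \cite{Liu2015AMC} --- avoids expanding the cross term altogether: since $N$ is positive semidefinite, $\sqrt{N(\cdot,\cdot)}$ is a seminorm, so $\sqrt{N(u,u)}\le\sqrt{N(P_hu,P_hu)}+\sqrt{N(w,w)}\le\lambda_{h,1}^{-1/2}\sqrt{M(P_hu,P_hu)}+C_{h,0}\sqrt{M(w,w)}$, and the elementary inequality $(ax+by)^2\le(a^2+b^2)(x^2+y^2)$ combined with Pythagoras finishes the proof. No eigenfunction identity and no compactness argument is needed, and the same lines with $N(u,v)=(\nabla u,\nabla v)$ give the bound for $\mu_1$, as you note.
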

Then we have,
\begin{equation}\label{framework lower bounds}
\lambda_1 \ge \frac{\lambda_{h,1}}{1+\lambda_{h,1}C_{h,0}^2}, \quad
\mu_1 \ge \frac{\mu_{h,1}}{1+\mu_{h,1}C_{h,1}^2}\:.
\end{equation}

\begin{remark}
The above theorem does not require \(V^h\subset V\).
Thus, we can use non-conforming finite element methods to obtain lower
eigenvalue bounds. 
\end{remark}

Due to the special setting of the nonconforming space $V^h$, the projection $P_h$ here is nothing else but the 
Fujino--Morley interpolation defined over $\mathcal{T}^h$. That is,
$$
(P_h u)|_T = \Pi^{\FM}(u|_T), \quad \mbox{ for each } T \in \mathcal{T}^h\:.
$$
The error constants $C_{0,h}$ and $C_{1,h}$ are given by 
$$
C_{i,h} = \max_{T \in \mathcal{T}^h} C_i(T),  \quad i=0,1 \: .
$$
From existing estimation of $C_0$ and $C_1$, as quoted in (\ref{eq:constant_est_existing}) and (\ref{eq:constant_est_existing_liu}), we have
$$
\|u-P_h u\|\leq0.2575h^2\|D^2(u- P_h u)\|, \quad \|\nabla(u-P_h u)\|\leq0.1893h\|D^2(u- P_h u)\| \:.
$$
Therefore, the explicit lower bounds of $\lambda_1$ and $\mu_1$ are given as 
\begin{equation}\label{framework lower bounds_1}
{\lambda_1}\geq\frac{{\lambda_{h,1}}}{1+0.2575^2h^4{\lambda_{h,1}}},\quad{\mu_1}\geq\frac{{\mu_{h,1}}}{1+0.1893^2h^2{\mu_{h,1}}}\:.
\end{equation}
\begin{remark}
The technique of K. Kobayashi in \cite{Kobayashi-2015-CM} can be applied here to provide upper bound of $C_0$ (lower bound of $\lambda_1$) directly without a prior information of $C_0$, 
i.e., the rough bound in (\ref{eq:constant_est_existing}) and (\ref{eq:constant_est_existing_liu}).
\end{remark}

\section{Variation of constants upon perturbation of triangle
shape}\label{header-n516}

In the last section, Theorem \ref{Liu_thm} provides a method to bound constants for a concrete triangle $K$. 
To bound constant $C_0$ and $C_1$ for triangles of arbitrary shapes, we need to consider the variation of constants upon the perturbation of triangle shape.
\paragraph{Linear perturbation of triangle $K$}
Define linear mapping \(Q\) by
\begin{equation}
\label{def:Q}
Q:=\left( \begin{array}{cc}1 &\alpha \\
0 &\beta  \end{array}\right),    
\end{equation}
where $\alpha\approx0$ and $\beta\approx1$.
Apply \(Q\) to the point \((x,y)^T\) of \(K\) by
\[\left(      \begin{array}{c} x \\y  \end{array} \right) \rightarrow \left(      \begin{array}{c} \tilde x \\\tilde y  \end{array} \right) =Q\left(      \begin{array}{c} x \\y  \end{array} \right)=\left(      \begin{array}{c} x+\alpha y \\\beta y  \end{array} \right)\]
and \(\widetilde{K}\) is the triangle with vertices \(O(0,0),A(1,0)\) and
\(\widetilde B(x+\alpha y,\beta y)\).
For the variation of norms of $u$ on $K$ under $Q$, we have the following lemma.
\begin{lemma}\label{lemma2} Given $u\in H^2(K)$ and define $\tilde{u}=u\circ Q^{-1}\in H^2(\widetilde{K})$, where $Q=\left( \begin{array}{cc}1 &\alpha \\
0 &\beta  \end{array}\right)$, and $\beta > 0$. Define $\gamma=\alpha^2+\beta^2+1$. Then
\begin{enumerate}[(a)]
\item %(a)
  For \(L^2(K)\)-norm, we have
\[\|\tilde u\|^2_{\widetilde K}=\beta\|u\|^2_{K}\:.\]
\item %(b)
  For \(H^1(K)\)-norm, we have
\[\displaystyle\frac{\gamma-\sqrt{\gamma^2-4\beta^2}}{2}\|\nabla\tilde  u\|^2_{\widetilde K}\leq \beta \|\nabla u\|^2_{K}\leq\frac{\gamma+\sqrt{\gamma^2-4\beta^2}}{2}\|\nabla\tilde u\|^2_{\widetilde K}\:.\]
\item %(c)
  For the \(H^2(K)\)-norm, we have
\[
\displaystyle 
\frac{\left(\gamma-\sqrt{\gamma^2-4\beta^2}\right)^2}{4}\|D^2\tilde u\|^2_{\widetilde K}
\leq\beta \|D^2 u\|^2_{K}
\leq  \frac{\left( \gamma+\sqrt{\gamma^2-4\beta^2 }\right)^2}{4} \|D^2\tilde u\|^2_{\widetilde K}\:.
\]
\end{enumerate}
\end{lemma}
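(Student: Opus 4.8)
The plan is to reduce all three bounds to the affine change of variables $x = Q^{-1}\tilde x$ — under which $\tilde u(\tilde x) = u(x)$ and $\md\tilde x = |\det Q|\,\md x = \beta\,\md x$ — combined with elementary matrix-norm estimates applied pointwise before integrating over $K$. The only spectral information needed is the set of eigenvalues of the symmetric positive definite matrix $Q^{T}Q$. A direct computation gives $\mathrm{tr}(Q^{T}Q) = 1+\alpha^{2}+\beta^{2} = \gamma$ and $\det(Q^{T}Q) = \beta^{2}$, so its eigenvalues are $\lambda_{\pm} = \tfrac12\bigl(\gamma\pm\sqrt{\gamma^{2}-4\beta^{2}}\bigr)$; hence $\|Q\|_{2}^{2} = \lambda_{+}$ and $\|Q^{-1}\|_{2}^{2} = \lambda_{-}^{-1}$, where $\|\cdot\|_{2}$ denotes the spectral norm.

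Part (a) is immediate from the change of variables: $\|\tilde u\|_{\widetilde K}^{2} = \int_{\widetilde K}|u(Q^{-1}\tilde x)|^{2}\,\md\tilde x = \beta\int_{K}|u(x)|^{2}\,\md x = \beta\|u\|_{K}^{2}$. For part (b) the chain rule gives $\nabla\tilde u = Q^{-T}\nabla u$ at corresponding points, so that $\|\nabla\tilde u\|_{\widetilde K}^{2} = \beta\int_{K}(\nabla u)^{T}(Q^{T}Q)^{-1}\nabla u\,\md x$. Since $(Q^{T}Q)^{-1}$ has eigenvalues $\lambda_{\pm}^{-1}$, the pointwise Rayleigh-quotient bound $\lambda_{+}^{-1}|\nabla u|^{2}\le(\nabla u)^{T}(Q^{T}Q)^{-1}\nabla u\le\lambda_{-}^{-1}|\nabla u|^{2}$ integrates to $\beta\lambda_{+}^{-1}\|\nabla u\|_{K}^{2}\le\|\nabla\tilde u\|_{\widetilde K}^{2}\le\beta\lambda_{-}^{-1}\|\nabla u\|_{K}^{2}$, which on rearrangement is exactly the double inequality in (b).

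For part (c), differentiating once more shows that the Hessian transforms as $H_{\tilde u} = Q^{-T}H_{u}Q^{-1}$, where $H_{u}$ is the $2\times2$ Hessian matrix of $u$; since for $u\in H^{2}$ one has $u_{xy}=u_{yx}$ a.e., the quantity $\|D^{2}u\|$ coincides pointwise with the Frobenius norm $\|H_{u}\|_{F}$, so it suffices to control $\|Q^{-T}H Q^{-1}\|_{F}$ for symmetric $H$. The mixed submultiplicative inequality $\|AXB\|_{F}\le\|A\|_{2}\|X\|_{F}\|B\|_{2}$, applied once to $H_{\tilde u} = Q^{-T}H_{u}Q^{-1}$ and once to $H_{u} = Q^{T}H_{\tilde u}Q$, yields the pointwise two-sided estimate $\lambda_{+}^{-1}\|H_{u}\|_{F}\le\|H_{\tilde u}\|_{F}\le\lambda_{-}^{-1}\|H_{u}\|_{F}$. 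Squaring, integrating over $K$, and multiplying by $\beta$ gives $\beta\lambda_{+}^{-2}\|D^{2}u\|_{K}^{2}\le\|D^{2}\tilde u\|_{\widetilde K}^{2}\le\beta\lambda_{-}^{-2}\|D^{2}u\|_{K}^{2}$; substituting $\lambda_{\pm} = \tfrac12(\gamma\pm\sqrt{\gamma^{2}-4\beta^{2}})$ and rearranging produces the claimed bounds in (c).

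The chain-rule identities for $\nabla\tilde u$ and $H_{\tilde u}$ and the trace/determinant computation are routine. The one place that deserves care is (c): one must recognize that $\|D^{2}u\|$ is the Frobenius norm of the Hessian (so that the clean matrix identity $H_{\tilde u} = Q^{-T}H_{u}Q^{-1}$ is directly usable), and then use the mixed Frobenius–spectral submultiplicative bound rather than crude entrywise estimates — this is what makes the constants emerge as $\lambda_{\pm}^{2}$, and in fact makes them sharp, as seen by taking rank-one Hessians aligned with the eigenvectors of $Q^{T}Q$. Everything else is the same scaling bookkeeping used in (b).
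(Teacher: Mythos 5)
Your proof is correct, and parts (a) and (b) follow the paper's argument essentially verbatim (change of variables contributing the Jacobian $\beta$, plus pointwise Rayleigh-quotient bounds with the eigenvalues $\lambda_{\pm}=\tfrac12\bigl(\gamma\pm\sqrt{\gamma^2-4\beta^2}\bigr)$ of $Q^tQ$, read off from the trace and determinant). Part (c) is where you genuinely diverge. The paper vectorizes the second derivatives as the $4$-vector $(u_{xx},u_{xy},u_{yx},u_{yy})^t$, writes down the explicit $4\times4$ transformation matrix $T$ (in effect the Kronecker square of $Q$), and computes the extreme eigenvalues of $T^tT$ directly, finding $\lambda_{\min}(T^tT)=\lambda_{\min}^2(Q^tQ)$ and $\lambda_{\max}(T^tT)=\lambda_{\max}^2(Q^tQ)$; as a byproduct it also identifies the double interior eigenvalue $\beta^2$, which it records in a remark. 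You instead keep the Hessian as a symmetric $2\times2$ matrix, use the congruence $H_{\tilde u}=Q^{-T}H_uQ^{-1}$, identify $\|D^2u\|$ pointwise with $\|H_u\|_F$ (which does require the a.e.\ symmetry $u_{xy}=u_{yx}$ for $u\in H^2$ — a step the paper's $4$-vector formulation sidesteps), and apply the mixed bound $\|AXB\|_F\le\|A\|_2\,\|X\|_F\,\|B\|_2$ in both directions. Your route avoids the $4\times4$ eigenvalue computation entirely and makes it transparent why the $H^2$ constants are exactly the squares of the $H^1$ ones, and your sharpness observation via rank-one Hessians aligned with the singular vectors of $Q$ confirms that nothing is lost relative to the paper's exact spectral computation; the paper's route is more explicit and yields the full spectrum of $T^tT$, which it reuses in the subsequent remark. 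Both arguments produce identical constants.
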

\pf The equality of (a) is evident. Let $K$ be the triangle with fixed vertices $O(0,0)$ and $A(1,0)$. Let us introduce the linear transform of $K$. Since
\((u_x,u_y)^t=Q\cdot(\tilde u_{\tilde x},\tilde u_{\tilde y})^t\), we have \begin{equation*}
\lambda_{\min}(Q^tQ)\cdot(\tilde u_{\tilde x}^2+\tilde u_{\tilde y}^2)\le u_x^2+u_y^2\le\lambda_{\max}(Q^tQ)\cdot(\tilde u_{\tilde x}^2+\tilde u_{ y}^2)
\end{equation*}
where \(\lambda_{\min}(Q^tQ)\) and \(\lambda_{\max}(Q^tQ)\) denoted the minimum and maximum eigenvalues of \(Q^tQ\), respectively. Therefore
\[\displaystyle {\lambda_{\min}(Q^tQ)}\|\nabla\tilde  u\|^2_{
\widetilde K}\leq {\beta} \|\nabla u\|^2_{K}\leq {\lambda_{\max}(Q^tQ)} \|\nabla\tilde u\|^2_{\widetilde K}.\] 
The eigenvalues of $Q^t Q$ are listed below:
\[
\lambda_{\min}(Q^tQ)=\frac{\gamma-\sqrt{\gamma^2-4\beta^2}}{2};\quad
\lambda_{\max}(Q^tQ)=\frac{\gamma+\sqrt{\gamma^2-4\beta^2}}{2},\]
where $\gamma=\alpha^2+\beta^2+1$.

For the second order derivatives, we have \[(u_{xx},u_{xy},u_{yx},u_{yy})^t=T\cdot(\tilde u_{\tilde x\tilde x},\tilde u_{\tilde x\tilde y},\tilde u_{\tilde y\tilde x}, \tilde u_{\tilde y\tilde y})^t,\]
where
\[
T=\left( \begin{array}{cccc}1 &0 &0 &0 \\
\alpha &\beta &0 &0 \\
\alpha &0 &\beta &0 \\
\alpha^2 &\alpha\beta &\alpha\beta &\beta^2
\end{array}\right).
\]
We denote the minimum and maximum eigenvalues of \(T^tT\) by \(\lambda_{\min}(T^tT)\) and \(\lambda_{\max}(T^tT)\), respectively. 
For the \(H^2(K)\) norm, we have
%\[\lambda_{\min}(T^tT)\cdot(\tilde u_{\tilde x\tilde x}^2+2\tilde u_{\tilde x\tilde y}^2+\tilde u_{\tilde y\tilde y}^2)\le u_{xx}^2+2u_{xy}^2+u_{yy}^2\le\lambda_{\max}(T^tT)\cdot(\tilde u_{\tilde x\tilde x}^2+2\tilde u_{\tilde x\tilde y}^2+\tilde u_{\tilde y\tilde y}^2),\] and
\[
{\lambda_{\min}(T^tT)}\|D^2\tilde  u\|^2_{\widetilde K}\leq {\beta} \|D^2 u\|^2_{K} 
\leq \lambda_{\max}(T^tT)\|D^2\tilde u\|^2_{\widetilde K},
\] 
where the minimum and maximum eigenvalues are
\begin{eqnarray*}
\lambda_{\min}(T^tT)&=&\frac{\gamma^2-2\beta^2-\gamma\sqrt{\gamma^2-4\beta^2}}{2}=\lambda^2_{\min}(Q^tQ),\\
\lambda_{\max}(T^tT)&=&\frac{\gamma^2-2\beta^2+\gamma\sqrt{\gamma^2-4\beta^2}}{2}=\lambda^2_{\max}(Q^tQ)\:.
\end{eqnarray*}
\hfill\(\square\)\\
\begin{remark}
Note that there are repeated eigenvalue $\beta^2$ of positive definite matrix $T^tT$ with algebraic multiplicity 2. 
It's easily to verify $\lambda_{\max}(T^tT)>\beta^2>\lambda_{\min}(T^tT)>0$ by using the fact $\gamma^2-4\beta^2=(\gamma-2\beta^2)^2+4\alpha^2\beta^2>0.$  
\end{remark}

Below, we show several results for special values of $\alpha$ and $\beta$.

\begin{lemma}\label{lemma2a}
Let $\alpha = 0$, $\beta = 1+\epsilon$. For $\epsilon > 0$, we have
\[
(1+\epsilon)^{-1}\|\nabla\tilde  u\|^2_{\widetilde K} \leq\|\nabla u\|^2_{K} \leq (1+\epsilon)
\|\nabla\tilde u\|^2_{\widetilde K}\:,
\]
and 
\[\displaystyle 
(1+\epsilon)^{-1}
\|D^2\tilde u\|^2_{\widetilde K}
\leq\|D^2 u\|^2_{K}
\leq (1+\epsilon)^3
\|D^2\tilde u\|^2_{\widetilde K}\:.\]
For $-1<\epsilon<0$, we have
\[\displaystyle 
(1+\epsilon)\|\nabla\tilde  u\|^2_{\widetilde K} \leq\|\nabla u\|^2_{K} \leq (1+\epsilon)^{-1}
\|\nabla\tilde u\|^2_{\widetilde K}\:,
\]
and 
\[\displaystyle 
(1+\epsilon)^{3}
\|D^2\tilde u\|^2_{\widetilde K}
\leq\|D^2 u\|^2_{K}
\leq (1+\epsilon)^{-1}
\|D^2\tilde u\|^2_{\widetilde K}\:.\]
\end{lemma}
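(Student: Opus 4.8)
The plan is to derive Lemma~\ref{lemma2a} directly from Lemma~\ref{lemma2} by specializing to $\alpha=0$, $\beta=1+\epsilon$ and simplifying the eigenvalue expressions, which collapse to clean powers of $1+\epsilon$ in this case. Parts (b) and (c) of Lemma~\ref{lemma2} are all that is needed, since Lemma~\ref{lemma2a} only concerns the $H^1$ and $H^2$ seminorms.

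First I would substitute $\alpha=0$, $\beta=1+\epsilon$ into the quantity $\gamma=\alpha^2+\beta^2+1$, obtaining $\gamma=\beta^2+1=(1+\epsilon)^2+1$. The key algebraic simplification is that the discriminant appearing under the square roots in Lemma~\ref{lemma2} factors as a perfect square:
$$
\gamma^2-4\beta^2=(\beta^2+1)^2-4\beta^2=(\beta^2-1)^2,
$$
so $\sqrt{\gamma^2-4\beta^2}=|\beta^2-1|=|\epsilon(2+\epsilon)|$. Consequently the eigenvalues $\tfrac12\left(\gamma\pm\sqrt{\gamma^2-4\beta^2}\right)$ of $Q^tQ$ reduce to $\tfrac12\left(\gamma\pm|\beta^2-1|\right)$, which are exactly $1$ and $\beta^2$ (in an order depending on the sign of $\beta^2-1$); and since $\lambda_{\min/\max}(T^tT)=\lambda_{\min/\max}^2(Q^tQ)$, the eigenvalues of $T^tT$ are $1$ and $\beta^4$, in the same order.

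Then I would split into two cases according to the sign of $\beta^2-1$, i.e.\ of $\epsilon$. For $\epsilon>0$ one has $\beta^2>1$, so $\lambda_{\min}(Q^tQ)=1$, $\lambda_{\max}(Q^tQ)=\beta^2$, $\lambda_{\min}(T^tT)=1$, $\lambda_{\max}(T^tT)=\beta^4$; for $-1<\epsilon<0$ one has $\beta^2<1$ and the two eigenvalues exchange roles. In each case I substitute these values into the inequality chains of parts (b) and (c) of Lemma~\ref{lemma2}, and then divide every term by $\beta=1+\epsilon$, which is legitimate and preserves the direction of the inequalities because $1+\epsilon>0$. This produces exactly the four displayed inequalities of the lemma, with the powers $(1+\epsilon)^{\pm1}$ bounding the $H^1$ seminorm and the powers $(1+\epsilon)^{-1}$, $(1+\epsilon)^{3}$ (respectively $(1+\epsilon)^{3}$, $(1+\epsilon)^{-1}$ when $-1<\epsilon<0$) bounding the $H^2$ seminorm.

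There is no substantial obstacle here; the only point requiring a little care is tracking which of $\{1,\beta^2\}$ (respectively $\{1,\beta^4\}$) plays the role of the minimum eigenvalue and which the maximum, since these identities swap as $\epsilon$ crosses $0$ — this is precisely why the statement is phrased as two separate cases. A quick sanity check worth recording is that at $\epsilon=0$, where $Q$ is the identity, all bounds degenerate to the trivial equalities $\|\nabla\tilde u\|^2_{\widetilde K}=\|\nabla u\|^2_K$ and $\|D^2\tilde u\|^2_{\widetilde K}=\|D^2 u\|^2_K$, consistently with both cases.
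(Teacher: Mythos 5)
Your proposal is correct and matches the paper's (implicit) argument: the paper states Lemma~\ref{lemma2a} without proof precisely because it is the specialization $\alpha=0$, $\beta=1+\epsilon$ of Lemma~\ref{lemma2}, where $\gamma^2-4\beta^2=(\beta^2-1)^2$ collapses the eigenvalues to $\{1,\beta^2\}$ and $\{1,\beta^4\}$ exactly as you compute. Your case split on the sign of $\epsilon$ and the division by $\beta=1+\epsilon>0$ reproduce the stated bounds.
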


Next lemma will be used in case of the perturbation of $B$ along the arc with center as $O$ and radius as $|AB|$.

\begin{lemma}\label{lemma2c}
For $0<\theta<\pi$, $0<\theta+\tau<\pi$, define 
linear mapping $Q$ with the following 
$\alpha$ and $\beta$,
$$
\alpha:=\frac{\cos(\theta+\tau)-\cos\theta}{\sin\theta}, \quad
\beta:=\frac{\sin(\theta+\tau)}{\sin\theta} \:.
$$
Then, in case $\tau<0$,
\begin{eqnarray}
\frac{\eta}{\sqrt{\beta}}
\|\nabla\tilde  u\|_{\widetilde K} 
\leq\|\nabla u\|_{K} \leq 
\frac{\rho}{\sqrt{\beta}} \|\nabla\tilde u\|_{\widetilde K},\\
\frac{\eta^2}{\sqrt{\beta}}
\|D^2\tilde u\|_{\widetilde K}
\leq\|D^2 u\|_{K}
\leq\frac{\rho^2}{\sqrt{\beta}}
\|D^2\tilde u\|_{\widetilde K}\:,
\end{eqnarray}
where 
$\rho$ and $\eta$ are defined  by
$$
\rho:=\frac{\cos\left(\frac{\theta+\tau}{2}\right)}{ \cos \left(\frac{\theta}{2}\right)},\quad
\eta:=\frac{\sin\left(\frac{\theta+\tau}{2}\right)}{ \sin \left(\frac{\theta}{2}\right)}\:.
$$
In case $\tau>0$, the above inequalities hold by exchanging the value of $\rho$ and $\eta$.

\end{lemma}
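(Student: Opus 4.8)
The plan is to reduce the statement to Lemma \ref{lemma2} by computing the quantities $\gamma=\alpha^2+\beta^2+1$, $\lambda_{\min}(Q^tQ)$ and $\lambda_{\max}(Q^tQ)$ in closed form for the specific choice of $\alpha$ and $\beta$ given here, and then recognizing the resulting eigenvalues as $\eta^2$ and $\rho^2$. First I would substitute $\alpha=(\cos(\theta+\tau)-\cos\theta)/\sin\theta$ and $\beta=\sin(\theta+\tau)/\sin\theta$ into $\gamma$; after clearing the common denominator $\sin^2\theta$, the numerator is $(\cos(\theta+\tau)-\cos\theta)^2+\sin^2(\theta+\tau)+\sin^2\theta$, which expands to $2-2\cos(\theta+\tau)\cos\theta-2\cos(\theta+\tau)\cos\theta\cdot(\ldots)$ — more precisely it simplifies, using $\cos^2+\sin^2=1$ twice, to $2-2\cos\tau\,(\,?\,)$; the key trigonometric identity to keep in mind is that $\gamma$ should come out as a product that factors nicely. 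I expect to find $\gamma\sin^2\theta = (\,\cos(\theta+\tau)-\cos\theta\,)^2 + \sin^2(\theta+\tau)+\sin^2\theta$ and, after using product-to-sum formulas, that $\gamma^2-4\beta^2$ is a perfect square in trigonometric terms.

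The cleaner route, which I would actually follow, is to verify directly that the two numbers $\rho^2$ and $\eta^2$ are exactly the eigenvalues of $Q^tQ$, i.e. that $\rho^2\eta^2=\det(Q^tQ)=\beta^2$ and $\rho^2+\eta^2=\mathrm{tr}(Q^tQ)=\gamma$. The product identity $\rho\eta = \dfrac{\cos(\frac{\theta+\tau}{2})\sin(\frac{\theta+\tau}{2})}{\cos(\frac\theta2)\sin(\frac\theta2)} = \dfrac{\sin(\theta+\tau)}{\sin\theta} = \beta$ follows from the double-angle formula $\sin x = 2\sin\frac x2\cos\frac x2$, so $\rho^2\eta^2=\beta^2=\det(Q^tQ)$ holds immediately. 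For the trace, I need $\rho^2+\eta^2 = \alpha^2+\beta^2+1$; writing $\rho^2+\eta^2 = \dfrac{\cos^2(\frac{\theta+\tau}{2})}{\cos^2(\frac\theta2)}+\dfrac{\sin^2(\frac{\theta+\tau}{2})}{\sin^2(\frac\theta2)}$ over the common denominator $\sin^2\theta/4$ and using half-angle identities to rewrite everything in terms of $\cos\theta$, $\cos(\theta+\tau)$, one checks it equals $1+\alpha^2+\beta^2$. Once both the trace and determinant match, $\rho^2$ and $\eta^2$ are precisely $\lambda_{\max}(Q^tQ)$ and $\lambda_{\min}(Q^tQ)$ (the larger of $\rho,\eta$ giving $\lambda_{\max}$), and likewise $\rho^4,\eta^4$ are $\lambda_{\max}(T^tT),\lambda_{\min}(T^tT)$ by the identity $\lambda_{\max/\min}(T^tT)=\lambda_{\max/\min}^2(Q^tQ)$ from Lemma \ref{lemma2}.

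Then I would plug these into the inequalities of Lemma \ref{lemma2}(b),(c). That lemma gives $\lambda_{\min}(Q^tQ)\|\nabla\tilde u\|^2_{\widetilde K}\le \beta\|\nabla u\|^2_K\le \lambda_{\max}(Q^tQ)\|\nabla\tilde u\|^2_{\widetilde K}$; dividing by $\beta$ and taking square roots turns $\lambda_{\min}(Q^tQ)/\beta$ and $\lambda_{\max}(Q^tQ)/\beta$ into $\eta^2/\beta$ and $\rho^2/\beta$ (when $\tau<0$, so that $\theta+\tau<\theta$ and hence $\eta<\rho$ with $0<\theta+\tau<\pi$), whose square roots are $\eta/\sqrt\beta$ and $\rho/\sqrt\beta$. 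The $D^2$ estimate is identical with $\lambda_{\min}(T^tT)=\eta^4$, $\lambda_{\max}(T^tT)=\rho^4$, giving the $\eta^2/\sqrt\beta$ and $\rho^2/\sqrt\beta$ factors. The case $\tau>0$ reverses the inequality $\eta$ versus $\rho$, which is exactly the statement's final sentence about exchanging the roles of $\rho$ and $\eta$.

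The main obstacle is purely computational: confirming the trace identity $\rho^2+\eta^2=\alpha^2+\beta^2+1$, since this is where all the half-angle bookkeeping concentrates and where a sign or factor-of-two slip is easiest. I would double-check it by evaluating at a convenient point (for instance $\tau\to 0$, where $\rho=\eta=1$, $\alpha=0$, $\beta=1$, giving $2=2$) and by a second check at $\theta=\pi/2$ where the algebra collapses. Everything else is a direct substitution into previously established lemmas, so once the two symmetric-function identities $\rho\eta=\beta$ and $\rho^2+\eta^2=\gamma$ are in hand the proof is essentially complete.
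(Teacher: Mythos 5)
Your proposal is correct and follows essentially the same route as the paper: both reduce the statement to Lemma \ref{lemma2} by identifying the eigenvalues of $Q^tQ$ for this specific $\alpha,\beta$ as $\rho^2$ and $\eta^2$ (with the ordering determined by the sign of $\tau$). The only difference is organizational --- you verify the identification via the symmetric functions $\rho^2\eta^2=\det(Q^tQ)=\beta^2$ and $\rho^2+\eta^2=\gamma$, whereas the paper computes $\gamma=\frac{2(1-\cos\theta\cos(\theta+\tau))}{\sin^2\theta}$ and $\gamma^2-4\beta^2=\frac{4\alpha^2}{\sin^2\theta}$ directly; the two verifications are equivalent.
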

\begin{proof}
This lemma is a direct result of Lemma \ref{lemma2} by noticing the following relations.
$$
\gamma=\frac{2(1-\cos\theta\cos(\theta+\tau))}{\sin^2\theta},\quad \gamma^2-4\beta^2=\frac{4\alpha^2}{\sin^2\theta}\:.
$$
\end{proof}

%In the following subsection, some perturbation analysis of the constants is given by using lemma \ref{lemma2}. %and given specific $\alpha$ and $\beta$, we can estimate The bound of constants which are perturbed by the corresponding changing of triangle shape. 
\subsection{Properties of constant \texorpdfstring{$C_0(K)$}{C0(K)}}\label{header-n567}

%In the estimation of constant $C_{\CR}(K)$, the monotonicity play an important role such that the just estimate the upper bound upon the curve $r=0$, $0<\theta\leq\pi/3$. Since there is similar property of $C_0(K)$, 
%We use the same way in \cite{Liu2015AMC} to find the upper bound of $C_0$. 
%We estimate the upper bound of $C_0$ by using the monotonicity of $C_0(K)$ with respect to the location of $B$. 

\begin{thm}\label{C_0_1} Let $K$ be the triangle with vertices $O(0,0)$, $A(1,0)$, $B(a,b) \in \Omega$, then
\(C_0(K)\) monotonically increases as $b$ increases.
%\(y-\)coordinate of vertex \(B\). 
\end{thm}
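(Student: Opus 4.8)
The plan is to fix the vertices $O(0,0)$ and $A(1,0)$ and regard $C_0$ as a function of the position of $B$, then show that moving $B$ vertically upward (increasing $b$) can only increase $C_0(K)$. The natural tool is Lemma \ref{lemma2a} with $\alpha=0$: the vertical scaling $\tilde y = \beta y$ with $\beta = 1+\epsilon > 1$ sends the triangle $K$ with $B=(a,b)$ to the triangle $\widetilde K$ with $\widetilde B = (a,(1+\epsilon)b)$, which is exactly the perturbation we want. So it suffices to compare the Rayleigh quotients on $K$ and $\widetilde K$ via the correspondence $u \leftrightarrow \tilde u = u\circ Q^{-1}$, which is a bijection between $V^{\FM}(K)$ and $V^{\FM}(\widetilde K)$ (the Fujino--Morley constraints—vanishing at the three vertices and zero mean normal derivative on each edge—are preserved under the affine map $Q$, since $Q$ maps edges to edges and the integral constraints transform consistently).

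The key computation is then to track the Rayleigh quotient $\|u\|_K / \|D^2 u\|_K$ under this correspondence. By part (a) of Lemma \ref{lemma2}, $\|\tilde u\|^2_{\widetilde K} = \beta \|u\|^2_K$, so the numerator scales cleanly. For the denominator I would use part (c) of Lemma \ref{lemma2}, equivalently the $\alpha=0$ specialization in Lemma \ref{lemma2a}: for $\epsilon>0$ one has $\|D^2 u\|^2_K \le (1+\epsilon)^3 \|D^2\tilde u\|^2_{\widetilde K}$, i.e. $(1+\epsilon)^{-3}\|D^2 u\|^2_K \le \|D^2\tilde u\|^2_{\widetilde K}$. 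Combining,
\[
\frac{\|\tilde u\|^2_{\widetilde K}}{\|D^2\tilde u\|^2_{\widetilde K}}
\;\ge\; \frac{\beta \|u\|^2_K}{(1+\epsilon)^{-3}}\cdot\frac{1}{\|D^2 u\|^2_K}
\;=\; (1+\epsilon)^4\,\frac{\|u\|^2_K}{\|D^2 u\|^2_K}
\;\ge\; \frac{\|u\|^2_K}{\|D^2 u\|^2_K}.
\]
Wait—I should double-check the direction: I want $\widetilde K$ (the taller triangle) to have the \emph{larger} constant, and indeed the inequality above shows each Rayleigh quotient on $\widetilde K$ dominates the corresponding one on $K$. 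Taking the supremum over $u\in V^{\FM}(K)$ (equivalently over $\tilde u\in V^{\FM}(\widetilde K)$) gives $C_0(\widetilde K) \ge C_0(K)$, which is precisely monotonic increase in $b$. One then notes that any two admissible heights are related by such a $\beta>1$ scaling, so the statement holds on all of $\Omega$.

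The main obstacle—and the point that needs care—is not the inequality chain itself but confirming that the affine map $Q$ genuinely carries $V^{\FM}(K)$ onto $V^{\FM}(\widetilde K)$, so that the suprema defining $C_0(K)$ and $C_0(\widetilde K)$ are taken over matched function classes; this requires checking that the vanishing-at-vertices condition and the three edge conditions $\int_{e_i}\partial_n u\,\md s = 0$ are invariant under $Q$ (the normal direction and the line element both change, but their product integrates to the right thing because $Q$ is affine and maps each edge onto the corresponding edge of $\widetilde K$). A secondary subtlety is ensuring $D^2 u \neq 0$ is preserved, which follows since $T^tT$ is positive definite (see the Remark after Lemma \ref{lemma2}), so $D^2\tilde u = 0$ iff $D^2 u = 0$. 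With these structural facts in hand, the estimate is immediate from Lemma \ref{lemma2a}.
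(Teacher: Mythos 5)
Your overall strategy is exactly the paper's: apply the vertical map $Q$ with $\alpha=0$, $\beta=1+\epsilon$, check that $u\mapsto \tilde u=u\circ Q^{-1}$ is a bijection of $V^{\FM}(K)$ onto $V^{\FM}(\widetilde K)$, and compare Rayleigh quotients through Lemma \ref{lemma2}. However, your central inequality chain is invalid: to bound the quotient $\|\tilde u\|^2_{\widetilde K}/\|D^2\tilde u\|^2_{\widetilde K}$ from below you need an \emph{upper} bound on the denominator $\|D^2\tilde u\|^2_{\widetilde K}$, but the half of Lemma \ref{lemma2a} you invoke, $(1+\epsilon)^{-3}\|D^2u\|^2_K\le\|D^2\tilde u\|^2_{\widetilde K}$, is a \emph{lower} bound on it. Dividing by a quantity that is bounded below yields ``$\le$'', not the ``$\ge$'' you wrote. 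Indeed the displayed claim
\[
\frac{\|\tilde u\|^2_{\widetilde K}}{\|D^2\tilde u\|^2_{\widetilde K}}\ \ge\ (1+\epsilon)^4\,\frac{\|u\|^2_K}{\|D^2u\|^2_K}
\]
is false in general: for any $u$ depending only on $x$ one has $\|\tilde u\|^2_{\widetilde K}=(1+\epsilon)\|u\|^2_K$ and $\|D^2\tilde u\|^2_{\widetilde K}=(1+\epsilon)\|D^2u\|^2_K$, so the Rayleigh quotient is exactly preserved, not amplified by $(1+\epsilon)^4$.

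The repair is immediate and turns your argument into the paper's one-line proof: use the \emph{other} half of the same estimate. With $\alpha=0$ and $\beta=1+\epsilon>1$ one has $\lambda_{\min}(T^tT)=1$, hence $\|D^2\tilde u\|^2_{\widetilde K}\le(1+\epsilon)\|D^2u\|^2_K$ (equivalently, the left inequality $(1+\epsilon)^{-1}\|D^2\tilde u\|^2_{\widetilde K}\le\|D^2u\|^2_K$ of Lemma \ref{lemma2a}). Combined with $\|\tilde u\|^2_{\widetilde K}=(1+\epsilon)\|u\|^2_K$ this gives
\[
\frac{\|\tilde u\|^2_{\widetilde K}}{\|D^2\tilde u\|^2_{\widetilde K}}\ \ge\ \frac{(1+\epsilon)\|u\|^2_K}{(1+\epsilon)\|D^2u\|^2_K}\ =\ \frac{\|u\|^2_K}{\|D^2u\|^2_K},
\]
and taking the supremum over the matched function classes yields $C_0(\widetilde K)\ge C_0(K)$. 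Your attention to the invariance of the Fujino--Morley constraints under $Q$ and to the preservation of $D^2u\neq0$ is correct and is a point the paper leaves implicit in this proof (it is spelled out only later, in the proof of Theorem \ref{C_1_2}).
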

\begin{proof}
Take $\alpha=0$, $\beta=1+\epsilon$ ($\epsilon>0$) for transformation $Q$ in (\ref{def:Q}). 
Then $QB$ moves $B$ along y-direction and
\[
\|\tilde{v}\|^2_{\widetilde{K}}=(1+\epsilon)\|v\|^2_{K}, \quad \|D^2\tilde{v}\|^2_{\widetilde{K}}  \leq(1+\epsilon)\|D^2v\|^2_{K}\:.
\]
Then we can easily draw the conclusion from the definition of the constant.
\end{proof}

%Similarly, if $\epsilon < 0$, by lemma \ref{lemma2b}, we have $C_0(\tilde K) \leq C_0(K)$.
% Then....we cannot have the following...
%Therefore, \(C_0(K)\) is monotonically increasing as $b$ increases.
%\(y-\)coordinate of \(B\).

%By above theorem, we just consider the case
Below we consider the perturbation of \(B=(\cos\theta,\sin\theta)\) along $\theta$ direction. 
By using Lemma \ref{lemma2} and \ref{lemma2c}, we can easily obtain the following result.
% just stop here 06/02
\begin{thm}\label{C_0_2} 
Let $K$ be the triangle with vertices $O(0,0)$, $A(1,0)$, $B(\cos\theta,\sin\theta)$ ($B \in \Omega$)
and $\widetilde{K}$ be the triangle with vertices $O$, $A$ and
\(\widetilde{B}=(\cos(\theta+\tau),\sin(\theta+\tau))\) ($\widetilde{B} \in \Omega$). 
We have the following two inequalities: 
\[
C_0(\widetilde{K})\leq\frac{\cos^2(\frac{\theta+\tau}{2})}{\cos^2(\frac{\theta}{2})}C_0(K)~~ (\tau<0);~~
C_0(\widetilde{K})\leq\frac{\sin^2(\frac{\theta+\tau}{2})}{\sin^2(\frac{\theta}{2})}C_0(K)~~ (\tau>0)\:.
\]
\end{thm}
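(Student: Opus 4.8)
The plan is to exhibit $\widetilde K$ as the image $Q(K)$ of $K$ under the single linear map $Q$ supplied by Lemma~\ref{lemma2c}, and then to transport the Rayleigh-quotient description $(\ref{eq:def-constants-0-1})$ of $C_0$ across $Q$, exactly as the proof of Theorem~\ref{C_0_1} transports it across a vertical stretch. First I would fix $\alpha=(\cos(\theta+\tau)-\cos\theta)/\sin\theta$ and $\beta=\sin(\theta+\tau)/\sin\theta$ as in Lemma~\ref{lemma2c}, let $Q$ be the matrix from $(\ref{def:Q})$, and check directly that $Q$ fixes $O$ and $A$ and sends $B=(\cos\theta,\sin\theta)$ to $\widetilde B=(\cos(\theta+\tau),\sin(\theta+\tau))$, so that $Q(K)=\widetilde K$; since $B,\widetilde B\in\Omega$ forces $\theta,\theta+\tau\in(0,\pi)$, Lemma~\ref{lemma2c} indeed applies.

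The only step that needs a genuine argument is that $u\mapsto\tilde u:=u\circ Q^{-1}$ restricts to a bijection of $V^{\FM}(K)$ onto $V^{\FM}(\widetilde K)$: this cannot be read mechanically off $(\ref{def:V-FM})$, because a non-conformal affine map does not preserve the edge conditions $\int_{e_i}\partial_n u\,\md s=0$. I would handle it through the affine-stable reformulation
\[
V^{\FM}(K)=\bigl\{\,v\in H^2(K)\ :\ v(O)=v(A)=v(B)=0,\ {\textstyle\int_K D^2 v\,\md x}=0\,\bigr\},
\]
which holds because $\int_K D^2(\Pi^{\FM}v)\,\md x=\int_K D^2 v\,\md x$ for every $v\in H^2(K)$: on each straight edge $e_i$, both $\int_{e_i}v_x\,\md s$ and $\int_{e_i}v_y\,\md s$ are determined by the three vertex values of $v$ together with the normal moments $\int_{e_i}\partial_n v\,\md s$, so $\int_K D^2 v$ depends only on the Fujino--Morley degrees of freedom of $v$. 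Under this reformulation the bijection is clear: the vertex conditions transform trivially, and the components of $\int_K D^2 v\,\md x$ are carried to those of $\int_{\widetilde K}D^2\tilde u\,\md x$ by the invertible (triangular in $\alpha,\beta$) matrix appearing in Lemma~\ref{lemma2}. I would record this reformulation as a short preliminary lemma, or else simply invoke the transfer principle already used in Theorem~\ref{C_0_1}.

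With the bijection available the estimate is a short calculation. Fix $\tilde u\in V^{\FM}(\widetilde K)$ with $D^2\tilde u\neq0$ and put $u:=\tilde u\circ Q\in V^{\FM}(K)$; the two-sided bound in Lemma~\ref{lemma2c} forces $D^2u\neq0$ as well. Lemma~\ref{lemma2}(a) gives $\|\tilde u\|_{\widetilde K}=\sqrt{\beta}\,\|u\|_K$, while for $\tau<0$ the upper $H^2$-estimate of Lemma~\ref{lemma2c}, $\|D^2u\|_K\le(\rho^2/\sqrt{\beta})\|D^2\tilde u\|_{\widetilde K}$ with $\rho=\cos\!\big((\theta+\tau)/2\big)/\cos\!\big(\theta/2\big)$, rearranges to $\|D^2\tilde u\|_{\widetilde K}\ge(\sqrt{\beta}/\rho^2)\|D^2u\|_K$. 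Dividing, the factors $\sqrt{\beta}$ cancel and
\[
\frac{\|\tilde u\|_{\widetilde K}}{\|D^2\tilde u\|_{\widetilde K}}\ \le\ \rho^2\,\frac{\|u\|_K}{\|D^2u\|_K}\ \le\ \rho^2\,C_0(K).
\]
Taking the supremum over $\tilde u\in V^{\FM}(\widetilde K)$ gives $C_0(\widetilde K)\le\rho^2C_0(K)=\frac{\cos^2(\frac{\theta+\tau}{2})}{\cos^2(\frac{\theta}{2})}C_0(K)$, as claimed. For $\tau>0$ the same argument applies after interchanging $\rho$ and $\eta$ in Lemma~\ref{lemma2c}, which replaces the factor by $\sin^2(\frac{\theta+\tau}{2})/\sin^2(\frac{\theta}{2})$.

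The main obstacle is the second paragraph: since the Fujino--Morley element is not affine equivalent in the naive sense, one really must use the $\int_K D^2 v\,\md x$ characterisation of $V^{\FM}$ to see that $Q$ intertwines the two kernel spaces. Once that is in place, the theorem reduces to the norm identities of Lemmas~\ref{lemma2} and~\ref{lemma2c} together with one division.
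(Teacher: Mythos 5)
Your proof is correct and follows the route the paper intends: the paper states Theorem \ref{C_0_2} with no written proof beyond the remark that it follows from Lemmas \ref{lemma2} and \ref{lemma2c}, and your final computation (the identity $\|\tilde u\|^2_{\widetilde K}=\beta\|u\|^2_K$ divided by the lower bound on $\|D^2\tilde u\|_{\widetilde K}$ from Lemma \ref{lemma2c}, with the $\sqrt{\beta}$ cancelling and $\rho^2$, resp.\ $\eta^2$, surviving) is exactly the intended argument.

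The one place you genuinely diverge is the verification that $u\mapsto u\circ Q^{-1}$ maps $V^{\FM}(K)$ onto $V^{\FM}(\widetilde K)$. The paper does prove this, but only later, in the proof of Theorem \ref{C_1_2}, and by a more elementary argument: vanishing at the vertices gives $\int_{e_i}\partial v/\partial\tau\,\md s=0$, which together with $\int_{e_i}\partial v/\partial n\,\md s=0$ yields the affine-invariant vector condition $\int_{e_i}\nabla v\,\md s=0$, whence $\int_{\tilde e_i}\partial\tilde v/\partial\tilde n\,\md s=0$. Your characterisation of $V^{\FM}(K)$ by the vertex conditions plus $\int_K D^2v\,\md x=0$ is a valid alternative, but as written you only justify the inclusion of $V^{\FM}(K)$ into that set; the bijectivity argument uses the characterisation in both directions. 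For the converse you should add that, once the vertex values vanish, $\int_K D^2v\,\md x=\sum_i m_i\, n_i\otimes n_i$ with $m_i=\int_{e_i}\partial v/\partial n\,\md s$, and that the three matrices $n_i\otimes n_i$ are linearly independent for a nondegenerate triangle, so $\int_K D^2v\,\md x=0$ forces $m_i=0$. Alternatively, simply quote the tangential-plus-normal argument from the proof of Theorem \ref{C_1_2}; either way the gap is easily closed and the rest of your proof stands.
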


\subsection{Properties of \texorpdfstring{$C_{\CR}(K)$}{CCR(K)} and \texorpdfstring{\(C_1(K)\)}{C1(K)}}\label{header-n609}
As shown in (\ref{eq:constant_est_existing_liu}), an upper bound of $C_1(K)$ is known already via the estimation of $C_{\CR}(K)$. 
For the purpose of a more accurate estimation for \(C_1(K)\), the perturbation of  $C_1(K)$ respect to $B$ will be required in 
next section. \\

%Numerical evaluation of \(C_1(K)\) shows that \(C_1(K)\) also
%monotonically increases as the \(y\)-coordinate of \(B\) increases; see Fig. \ref{fig:C_CR_C_1_contour}.

%However, it is not easy to directly prove the monotonicity of \(C_1(K)\). 
%Alternatively , we utilize the monotonicity of \(C_{\CR}(K)\) to give sharp bound of \(C_1(K)\).\\

%We combine the monotonicity of $C_{\CR}(K)$ and

As a preparation, let us consider the perturbation analysis of $C_{\CR}(K)$ upon moving $B$ along the $x-$direction. 

\begin{thm}\label{C_1_1} Let $K$ be the triangle with vertices $O(0,0)$, $A(1,0)$, $B(a,b)$ ($B \in \Omega$) and $\widetilde{K}$ be the triangle with vertices
\(O\), \(A\) and \(\widetilde{B}(a+b\epsilon,b)\) ($\widetilde{B} \in \Omega$) with the condition \(|\epsilon|<1/2\). Then,
\begin{eqnarray*}
C_{\CR}(\widetilde{K}) \leq \left(1+\frac{|\epsilon|}{2}+\frac{3\epsilon^2}{8}\right)~ C_{\CR}(K)~~ %\text{for}~~\epsilon>0~~and\\
%C_{\CR}(\tilde{K})&\leq&(1-\epsilon/2+3\epsilon^2/8)C_{\CR}(K)~~\text{for}~ \epsilon<0\:.
\end{eqnarray*}
\end{thm}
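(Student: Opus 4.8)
The plan is to realize $\widetilde{K}$ as the image $Q(K)$ under the linear map $Q$ of (\ref{def:Q}) with $\alpha=\epsilon$ and $\beta=1$, so that $Q(a,b)^{t}=(a+b\epsilon,b)^{t}=\widetilde{B}$, and then to transport the variational characterization (\ref{eq:def-CR-constant}) of $C_{\CR}$ through $Q$ by means of Lemma \ref{lemma2}. The first step is to verify that $u\mapsto\tilde{u}:=u\circ Q^{-1}$ is a bijection from $V^{\CR}(K)$ onto $V^{\CR}(\widetilde{K})$. An affine map carries each edge $e_{i}$ of $K$ to the edge $\widetilde{e}_{i}$ of $\widetilde{K}$, and reparametrizing the line integral gives $\int_{\widetilde{e}_{i}}\tilde{u}\,ds=(|\widetilde{e}_{i}|/|e_{i}|)\int_{e_{i}}u\,ds$, so the constraint $\int_{e_{i}}u\,ds=0$ is preserved in both directions; invertibility of $Q$ together with the chain rule, which relates $\nabla u$ and $\nabla\tilde{u}$ through an invertible matrix, then gives the stated bijection as well as the equivalence $\nabla u\neq0\iff\nabla\tilde{u}\neq0$.

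Next I would make this quantitative. Since $\beta=1$, Lemma \ref{lemma2}(a) gives $\|\tilde{u}\|_{\widetilde{K}}^{2}=\|u\|_{K}^{2}$, while the right-hand inequality of Lemma \ref{lemma2}(b) gives $\|\nabla\tilde{u}\|_{\widetilde{K}}^{2}\geq\lambda_{\max}(Q^{t}Q)^{-1}\|\nabla u\|_{K}^{2}$. Hence
\[
\frac{\|\tilde{u}\|_{\widetilde{K}}^{2}}{\|\nabla\tilde{u}\|_{\widetilde{K}}^{2}}\;\leq\;\lambda_{\max}(Q^{t}Q)\,\frac{\|u\|_{K}^{2}}{\|\nabla u\|_{K}^{2}}\qquad\text{for all admissible }u,
\]
and taking the supremum over $u\in V^{\CR}(K)$, which by the first step is the same as the supremum over $\tilde{u}\in V^{\CR}(\widetilde{K})$, yields $C_{\CR}(\widetilde{K})\leq\sqrt{\lambda_{\max}(Q^{t}Q)}\,C_{\CR}(K)$.

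It then remains only to bound the scalar factor. With $\gamma=\alpha^{2}+\beta^{2}+1=\epsilon^{2}+2$, Lemma \ref{lemma2} gives $\lambda_{\max}(Q^{t}Q)=\tfrac{1}{2}\big(\epsilon^{2}+2+|\epsilon|\sqrt{\epsilon^{2}+4}\big)$, so after squaring the target inequality becomes $\epsilon^{2}+2+|\epsilon|\sqrt{\epsilon^{2}+4}\leq 2\big(1+\tfrac{|\epsilon|}{2}+\tfrac{3\epsilon^{2}}{8}\big)^{2}$; expanding the right-hand side and cancelling reduces this (for $\epsilon\neq0$, after dividing by $|\epsilon|$) to $\sqrt{1+\epsilon^{2}/4}\leq 1+\tfrac{|\epsilon|}{2}+\tfrac{3\epsilon^{2}}{8}+\tfrac{9|\epsilon|^{3}}{64}$, which follows from the elementary estimate $\sqrt{1+\epsilon^{2}/4}\leq 1+\epsilon^{2}/8$. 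This scalar inequality in fact holds for every real $\epsilon$, so the hypothesis $|\epsilon|<1/2$ is needed only to keep $\widetilde{B}$ within the admissible region $\Omega$. The one place that requires care is the first step — checking that the Crouzeix--Raviart edge constraints are exactly preserved under the affine change of variables; once that is settled the remaining steps are routine given Lemma \ref{lemma2}.
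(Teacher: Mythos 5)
Your proposal is correct and follows essentially the same route as the paper: the same shear map $Q$ with $\alpha=\epsilon$, $\beta=1$, the same use of Lemma \ref{lemma2} to get $C_{\CR}(\widetilde K)\le\sqrt{\lambda_{\max}(Q^tQ)}\,C_{\CR}(K)$ with $\lambda_{\max}(Q^tQ)=\tfrac12\bigl(\epsilon^2+2+|\epsilon|\sqrt{\epsilon^2+4}\bigr)$, and an equivalent elementary bound on this scalar factor (the paper uses $\sqrt{\epsilon^2+4}\le 2+|\epsilon|$, you use $\sqrt{1+\epsilon^2/4}\le 1+\epsilon^2/8$; both give the stated polynomial bound for all $\epsilon$). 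Your extra verification that the edge-mean constraints defining $V^{\CR}$ are preserved under the affine map is a detail the paper asserts without proof, and your observation that $|\epsilon|<1/2$ is only needed to keep $\widetilde B\in\Omega$ is consistent with the computation.
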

\begin{proof}
Take $\alpha=\epsilon$, $\beta=1$ for transformation $Q$ in (\ref{def:Q}). Since 
for \(\tilde{v}\in V^{\CR}(\widetilde{K})\), \(v=\tilde{v}\circ Q\in V^{\CR}(K)\),
from Lemma \ref{lemma2} , we have
$$
\|\tilde{v}\|^2_{\widetilde{K}}=\|v\|^2_{K}, ~~
\|\nabla\tilde v\|^2_{\widetilde K}\geq \frac{2}{\epsilon^2+2+\sqrt{\epsilon^4+4\epsilon^2}}\|\nabla v\|^2_{K}\:.
$$
By using Eq. \eqref{eq:def-CR-constant} and the above two equations, we have
\begin{equation}
\label{eq:estimate_along_x}    
C_{\CR}(\widetilde K)\leq \sqrt{\frac{\epsilon^2+2+\sqrt{\epsilon^4+4\epsilon^2}}{2}}C_{\CR}(K).
\end{equation}

In case \(\epsilon>0\), we have
\begin{eqnarray*} \frac{\epsilon^2+2+\sqrt{\epsilon^4+4\epsilon^2}}{2} 
\leq \frac{2+\epsilon^2+\epsilon(2+\epsilon)}{2}
\leq \left(1+\frac{\epsilon}{2}+\frac{3\epsilon^2}{8}\right)^2,
\end{eqnarray*}
and in case of \(\epsilon<0\) and $|\epsilon| < 1/2$, we have
\begin{eqnarray*} \frac{\epsilon^2+2+\sqrt{\epsilon^4+4\epsilon^2}}{2} 
\leq \frac{2+\epsilon^2-\epsilon(2-\epsilon)}{2}
\leq \left(1-\frac{\epsilon}{2}+\frac{3\epsilon^2}{8}\right)^2\:.
\end{eqnarray*}
Thus we can draw the conclusion.
\end{proof}

\begin{remark}
In the above proof, we use polynomial of $\epsilon$ to simplify the expression in (\ref{eq:estimate_along_x}).
The estimation of the constant has the exact order up to linear term \(\epsilon\), 
while the coefficients of quadratic term \(\epsilon^2\) are overestimated for the purpose of a simple expression.
\end{remark} 
\begin{thm}\label{C_1_2}
Let $K$ be the triangle with vertices $O(0,0)$, $A(1,0)$, $B(a,b)$  ($B \in \Omega$) and
\(\widetilde{K}\) be the triangle with vertices \(O(0,0)\), \(A(1,0)\) and
\(\widetilde{B}(a+\alpha b,(1+\epsilon) b)\)  ($\widetilde{B} \in \Omega$).
Assume \(|\alpha|, |\epsilon| \le 1/2 \), we have
\begin{equation}
\label{eq:perb-est-1}
C_1(\widetilde{K})\leq  \left( 1 + \frac{3}{2} |\alpha| + 2 |\epsilon| + \frac{3}{2}|\alpha|^2 + |\epsilon|^2\right) C_1(K) \quad (\epsilon \geq 0)
\end{equation}
\begin{equation}
\label{eq:perb-est-2}
C_1(\widetilde{K})\leq  \left( 1 + \frac{3}{2} |\alpha| + |\epsilon| + \frac{3}{2}|\alpha|^2 + 2|\epsilon|^2\right) C_1(K) \quad (\epsilon \le 0)
\end{equation}
\end{thm}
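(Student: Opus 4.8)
The plan is to decompose the transformation $\widetilde B = (a+\alpha b,(1+\epsilon)b)$ into two successive linear maps: first the horizontal shear $Q_1$ with matrix parameters $(\alpha,1)$ taking $K$ to an intermediate triangle $K'$ with apex $(a+\alpha b, b)$, and then the vertical scaling $Q_2$ with parameters $(0,1+\epsilon)$ taking $K'$ to $\widetilde K$. Since $C_1$ is defined through a Rayleigh quotient over $V^{\FM}$ (see \eqref{eq:def-constants-0-1}), and since composition with an affine map sends $V^{\FM}(K)$ bijectively onto $V^{\FM}$ of the image triangle (the point values and the edge-integral conditions on $\partial u/\partial n$ are preserved, up to nonzero scalar factors, under affine change of variables), it suffices to estimate how the quotient $\|\nabla u\|/\|D^2 u\|$ changes under each of $Q_1$ and $Q_2$ separately and then multiply the two bounds.

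First I would handle the shear $Q_1$ using Lemma \ref{lemma2} with $\alpha$ arbitrary ($|\alpha|\le 1/2$) and $\beta=1$, so that $\gamma = \alpha^2+2$ and $\gamma^2-4\beta^2 = \alpha^4+4\alpha^2$. From parts (b) and (c) of Lemma \ref{lemma2}, the numerator $\|\nabla u\|_{K}$ is controlled above by $(\lambda_{\max}(Q_1^tQ_1))^{1/2}\|\nabla\tilde u\|_{K'}$ and the denominator $\|D^2 u\|_K$ below by $(\lambda_{\min}(Q_1^tQ_1))\,\|D^2\tilde u\|_{K'} = \lambda_{\min}(Q_1^tQ_1)\|D^2\tilde u\|_{K'}$, using the identity $\lambda_{\min}(T^tT)=\lambda_{\min}^2(Q^tQ)$. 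Hence
\[
C_1(K') \le \frac{\sqrt{\lambda_{\max}(Q_1^tQ_1)}}{\lambda_{\min}(Q_1^tQ_1)}\,C_1(K).
\]
The factor $\sqrt{\lambda_{\max}}/\lambda_{\min}$ must then be bounded by $1+\tfrac32|\alpha|+\tfrac32|\alpha|^2$; this is the same kind of scalar estimate as in the proof of Theorem \ref{C_1_1}, where one writes $\lambda_{\min}(Q_1^tQ_1) = 2/(\alpha^2+2+\sqrt{\alpha^4+4\alpha^2})$ and $\lambda_{\max}(Q_1^tQ_1) = (\alpha^2+2+\sqrt{\alpha^4+4\alpha^2})/2$, uses $\sqrt{\alpha^4+4\alpha^2}\le |\alpha|(2+|\alpha|)$, and overestimates the resulting polynomial in $|\alpha|$ by a clean quadratic on the range $|\alpha|\le 1/2$.

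Next I would handle the vertical scaling $Q_2$ by specializing Lemma \ref{lemma2a} with the scale factor $1+\epsilon$. In the case $\epsilon\ge 0$ the lemma gives $\|\nabla\tilde u\|^2_{\widetilde K}\ge (1+\epsilon)^{-1}\|\nabla u\|^2_{K'}$ — wait, more precisely one needs $\|\nabla u\|_{K'}$ above and $\|D^2 u\|_{K'}$ below in terms of the $\widetilde K$ norms, giving $\|\nabla u\|^2_{K'}\le (1+\epsilon)\|\nabla\tilde u\|^2_{\widetilde K}$ and $\|D^2 u\|^2_{K'}\ge (1+\epsilon)^{-1}\|D^2\tilde u\|^2_{\widetilde K}$, hence $C_1(\widetilde K)\le (1+\epsilon)\,C_1(K')$ after noting the direction of the quotient; squaring aside, the factor is $\sqrt{(1+\epsilon)/(1+\epsilon)^{-1}} = 1+\epsilon$, which is already the stated $1+|\epsilon|+|\epsilon|^2$-type bound (indeed dominated by it) for $\epsilon\ge 0$. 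For $-1<\epsilon<0$ the roles of the exponents in Lemma \ref{lemma2a} swap, yielding a factor $(1+\epsilon)^{-1}$, and on $|\epsilon|\le 1/2$ one checks $(1+\epsilon)^{-1} = (1-|\epsilon|)^{-1} \le 1 + |\epsilon| + 2|\epsilon|^2$, which supplies the asymmetric $2|\epsilon|^2$ coefficient in \eqref{eq:perb-est-2}. Multiplying the shear factor $\bigl(1+\tfrac32|\alpha|+\tfrac32|\alpha|^2\bigr)$ by the scaling factor and expanding, discarding the mixed term $|\alpha||\epsilon|$ and higher-order pieces by absorbing them into the quadratic terms on the restricted range, gives \eqref{eq:perb-est-1} and \eqref{eq:perb-est-2}.

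The main obstacle I anticipate is not any single step but the bookkeeping of inequality directions: $C_1$ is a supremum of a ratio, so one must consistently bound the numerator norm from above and the denominator norm from below after each change of variables, and the two-sided estimates in Lemma \ref{lemma2} flip their useful direction depending on the sign of $\epsilon$ (and whether we push forward or pull back). A secondary nuisance is verifying that the clean polynomial majorants — e.g. $\sqrt{\lambda_{\max}}/\lambda_{\min}\le 1+\tfrac32|\alpha|+\tfrac32|\alpha|^2$ and $(1-|\epsilon|)^{-1}\le 1+|\epsilon|+2|\epsilon|^2$ — actually hold uniformly on $|\alpha|,|\epsilon|\le 1/2$; these are elementary one-variable inequalities but need the restriction to that interval, exactly as the hypothesis $|\alpha|,|\epsilon|\le 1/2$ indicates. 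Once those scalar bounds are in hand, the composition and final expansion are routine.
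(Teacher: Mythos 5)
Your overall strategy---factor $Q$ into a shear followed by a vertical scaling and multiply the two one-parameter perturbation factors---does not reach the stated coefficients, and this is not a bookkeeping issue that can be patched. The correct one-parameter factors are $\lambda_{\max}(Q_1^tQ_1)^{3/2}\approx 1+\tfrac32|\alpha|+\tfrac98\alpha^2$ for the shear and $(1+\epsilon)^{2}$ (not $1+\epsilon$) for the scaling with $\epsilon\ge 0$; their product contains the cross term $3|\alpha|\epsilon$ at second order, which cannot be absorbed into $\tfrac32\alpha^2+\epsilon^2$. Concretely, at $\alpha=\epsilon=0.1$ the product of the exact individual factors is $1.105125^{3/2}\cdot 1.21\approx 1.4057$, while the right-hand side of \eqref{eq:perb-est-1} is $1.375$; so even a fully rigorous version of your composition argument proves a strictly weaker inequality than the theorem. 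The paper avoids this by applying Lemma \ref{lemma2} to the full matrix $Q$ in one step and majorizing $\sqrt{f_2/f_1}=\lambda_{\max}(Q^tQ)/\sqrt{\lambda_{\min}(Q^tQ)}$ directly: there the coupling of $\alpha$ and $\epsilon$ enters only through $\sqrt{\gamma^2-4\beta^2}\sim 2\sqrt{\alpha^2+\epsilon^2}$, which is subadditive, so no multiplicative cross term appears and the coefficients $\tfrac32|\alpha|+2|\epsilon|$ (resp.\ $+|\epsilon|$) come out exactly. The reason your route looks like it works is the direction error you yourself flagged: the inequalities you cite for the scaling step ($\|\nabla u\|^2_{K'}\le(1+\epsilon)\|\nabla\tilde u\|^2_{\widetilde K}$ and $\|D^2u\|^2_{K'}\ge(1+\epsilon)^{-1}\|D^2\tilde u\|^2_{\widetilde K}$) bound $C_1(K')$ by $C_1(\widetilde K)$, not the reverse; taking the correct sides of Lemma \ref{lemma2a} yields $(1+\epsilon)^{1/2}\cdot(1+\epsilon)^{3/2}=(1+\epsilon)^2$, and with that the product visibly overshoots.

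A secondary gap: your parenthetical claim that ``the edge-integral conditions on $\partial u/\partial n$ are preserved, up to nonzero scalar factors, under affine change of variables'' is false as stated---normals are not mapped to normals by a shear, so $\int_{\tilde e_i}\partial\tilde v/\partial\tilde n\,\mathrm{d}s$ is a combination of $\int_{e_i}\partial v/\partial n\,\mathrm{d}s$ and $\int_{e_i}\partial v/\partial\tau\,\mathrm{d}s$. The invariance $v\in V^{\FM}(K)\Rightarrow v\circ Q^{-1}\in V^{\FM}(\widetilde K)$ holds only because the vertex conditions force the tangential integral to vanish as well; the paper spends several lines establishing exactly this, and your proof needs the same argument (twice, once per factor). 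To repair the proof you should abandon the factorization, work with the single map $Q$, and reduce to the scalar estimate of $\lambda_{\max}(Q^tQ)/\sqrt{\lambda_{\min}(Q^tQ)}$ as the paper does.
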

\begin{proof}
Let 
\(Q=\left( \begin{array}{cc}1 &\alpha \\
0 & 1+\epsilon  \end{array}\right) \) be the linear mapping from $K$ to $\widetilde K$.
Usually,  $\partial u/\partial n$ varies under the transform $Q$. However, for \({v}\in V^{\FM}({K})\), 
we still have \(\tilde v={v}\circ Q^{-1}\in V^{\FM}(\widetilde{K})\). To see this, 
notice that the condition ${v}(O)={v}(A)={v}(B)=0$ implies $\int_{e_i}\partial v/\partial \tau \md s=0$, where $\tau$ is the tangent vector of $e_i$. 
From the condition $\int_{e_i} \partial {v}/\partial n \md s=0$, we have
$$
\int_{e_i} \nabla {v} \cdot (t_1\tau + t_2 n )\md s =0\quad  \quad (\forall t_1,t_2\in R)\:.
$$
where $n$ is the unit norm vector on $e_i$. Hence, 
$$
\int_{\tilde{e}_i} \frac{ \partial \tilde{v} }{ \partial \tilde{n} } \md s =\frac{1}{|\det(Q)|}\int_{e_i} (Q\cdot\nabla v^t)\cdot \tilde n \md s=0  \quad  (\tilde{n}: \text{ normal direction of edge } \tilde{e}_i )\:.
$$
From Lemma \ref{lemma2}, we have, with \(\gamma=\alpha^2+(1+\epsilon)^2+1\),
\begin{eqnarray*}
\|\nabla u\|^2_{K} &\ge & \frac{\gamma-\sqrt{\gamma^2-4(1+\epsilon)^2}}{2(1+\epsilon)}\|\nabla\tilde  u\|^2_{\widetilde K}\:,\\
\|D^2 u\|^2_{\widetilde K} &\le& \frac{\gamma^2-2(1+\epsilon)^2+\gamma\sqrt{\gamma^2-4(1+\epsilon)^2}}{2(1+\epsilon)}\|D^2\tilde u\|^2_{K}\:.\end{eqnarray*}
Define 
$$
f_1(\alpha,\epsilon) := \frac{\gamma-\sqrt{\gamma^2-4(1+\epsilon)^2}}{2(1+\epsilon)} 
,\quad
f_2(\alpha,\epsilon) := \frac{\gamma^2-2\beta^2+\gamma\sqrt{\gamma^2-4(1+\epsilon)^2}}{2(1+\epsilon)}\:.
$$
%Notice the direction derivative of $\sqrt{f_2/f_1}$ respect to $(\alpha,\epsilon)$ varies upon the direction.
By taking the Taylor expansion of $\sqrt{f_2/f_1}$ at $\alpha=0,\epsilon=0$, we have, for $|\alpha|,|\epsilon|\le 1/2$, 
\begin{equation}
    \label{eq:taylor-1}
\frac{\sqrt{f_2}}{\sqrt{f_1}} \le 1 + \frac{3}{2} |\alpha| + 2 |\epsilon| +  \frac{3}{2}|\alpha|^2 + |\epsilon|^2 \quad (\epsilon \geq 0)\:,
\end{equation}
\begin{equation}
    \label{eq:taylor-2}
\frac{\sqrt{f_2}}{\sqrt{f_1}} \le 1 + \frac{3}{2} |\alpha| +  |\epsilon| +  \frac{3}{2}|\alpha|^2 + 2|\epsilon|^2 \quad (\epsilon \le 0)\:.
\end{equation}

By the definition of constant $C_1$, we can draw the conclusion.
\end{proof}

\iffalse
By applying the assumption $|\alpha|,|\epsilon|\le \delta $, we have
$$
\frac{\gamma-\sqrt{\gamma^2-4(1+\epsilon)^2}}{2} \geq 1-\delta-\delta\sqrt{(2+2\delta+\delta^2)} :=f_1(\delta) 
$$
and
$$\frac{\gamma^2-2\beta^2+\gamma\sqrt{\gamma^2-4(1+\epsilon)^2}}{2}
\leq 1+2\delta+5\delta^2+4\delta^3+2\delta^4+2\delta (\delta^2+\delta+1)\sqrt{\delta^2+2\delta +2}:=f_2(\delta)
$$
Take the Taylor expansion of $\sqrt{f_2/f_1}$ at $\delta=0$ and apply the condition $\delta<1/4$, then we have 

By the definition of constant $C_1$, we can draw the conclusion.

\begin{eqnarray}\label{C_1_ineq}
 C_1(\widetilde{K})\leq %\frac{1+(1+\sqrt2)\delta+2\delta^2}{1-(1+\sqrt2)\delta/2-2\delta^2}C_1(K)\:.
(1+\frac{3(1+\sqrt{2})}{2} \delta + 16\delta^2) C_1(K) \:.
\end{eqnarray}

 The coefficient of $\delta^2$ in Taylor expansion of $\sqrt{f_2/f_2}$ is about $7.16$.
 
\fi

\begin{remark}
In the estimation (\ref{eq:taylor-1}) and (\ref{eq:taylor-2}), 
the coefficients of $|\alpha|$ and $|\epsilon|$ agree with the ones in Taylor expansion, while the coefficients of \(|\alpha|^2\) and $|\epsilon|^2$ are
over-estimated for the purpose of simple expression.
\end{remark}

\section{Optimal estimation of constants}\label{header-n753}
From the monotonicity of $C_0(K)$ on $y$-coordinate of $B$ as shown in Theorem \ref{C_0_1}, 
the maximum value of $C_0(K)$ can only happens on the arc $r=1$, $0<\theta\le\pi/3$. 
Since the value of $C_0(K)$ depends on the $x$- and $y$-coordinate of $B$, $C_0(K)$ can be regarded as a function on $x_B$, $y_B$. 
Fig. \ref{fig:C_0_contour} displays the contour lines of $C_0(K)$ respect to $x_B,y_B$ in $[0.5,1]\times (0,1]$. 
Numerical estimation of $C_{0}(K)$ implies that the maximum is taken when $K$ is the regular triangle. 
 \begin{figure}[htbp]
 \begin{center}
 \includegraphics[scale=0.3]{./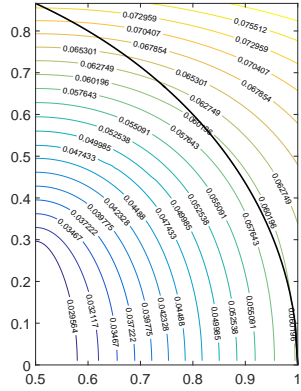}
 \caption{\label{fig:C_0_contour}The contour line of $C_0(K)$}
 \end{center}
 \end{figure}

\subsection{Optimal estimation of \texorpdfstring{\(C_0\)}{C0}}\label{header-n763}
On arc $r=1$, $0<\theta\le\pi/3$, we perform point-wise evaluation of $C_0(K)$ on a subdivision of $\theta$. 
Then the bound of $C_0(K)$ on whole arc is obtained by applying perturbation theory in Theorem \ref{C_0_2}.\\ 

Define \(\theta_i\) and \(\tau_i\) by
\[
\theta_i=\pi/3 \times \begin{cases}i \times 0.02&i=1,\ldots,48\\
0.95+0.05(1-2^{48-i}) & i=49,\ldots,59\\1&i=60. \end{cases}
\]
\[
\tau_i=\begin{cases}\theta_1 &i=1\\ \theta_i-\theta_{i-1} &i=2,\ldots,60.\end{cases}
\]
For each \(\theta_i\), we evaluate the constant \(C_0(K)\), then using the Theorem \ref{C_0_2} to give a upper bound of \(C_0(K)\) on each
sub-interval \((\theta_i-\tau_i,\theta_i]\) with perturbation
\(\tau_i.\) 
\begin{figure}[htbp]
 \begin{center}
 \includegraphics[scale=0.23]{./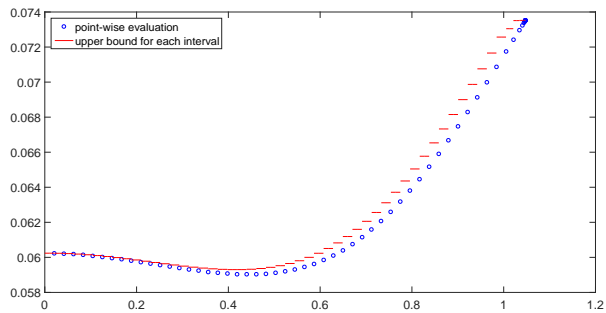}
 \caption{\label{fig:C_0_upperbound}Point-wise evolution of $C_0(K)$ at $\theta_i$ and upper bound of $C_0(K)$ for $\theta\in(\theta_i-\tau_i,\theta_i]$}
 \end{center}
 \end{figure} 
Fig. \ref{fig:C_0_upperbound} displays the upper bound of $C_0(K)$, where $x$-coordinate denote the size of $\angle BOA$ with range as $(0,\pi/3]$. 
The verified computing results show that $$C_0(K)\leq0.07353\:.$$ 

\subsection{Optimal estimation of \texorpdfstring{\(C_1\)}{C1}}\label{header-n778}

The estimation of $C_0(K)$ is relatively easily done, which is thanks to the monotonicity of the constant $C_0(K)$ with respect to the $y$-coordinate of vertex $B$ of $K$. 
However, such property of monotonicity is not available for $C_1(K)$. 
Thus, one has to consider the case of collapsed triangles (the vertex $B$ being close to $x$-axis), 
which is difficult to process because the estimation of Theorem \ref{C_1_2} 
has divergent bound for small $y$-coordinate of vertex $B(a,b)$.
To avoid such difficulties, we subdivide the area into two parts: 
$\Omega=\Omega_1\cup \Omega_2$ (see Fig. \ref{fig:Omega_rough_sepa}). 
On $\Omega_1$, we can estimate $C_1$ directly; 
on $\Omega_2$, the estimation of $C_1$ is done through $C_{\CR}(K)$ for collapsed triangles 
by using the relation $C_1(K) \le C_{\CR}(K)$. \\

The approximate numerical evaluation of $C_1(K)$ and $C_{\CR}(K)$ are displayed in Fig. \ref{fig:C_CR_C_1_contour},  from which we have the following information.

\begin{figure}[htbp]\begin{center}
     {\begin{tabular}[!]{cc}
   \subfigure{\includegraphics[height=65mm]{./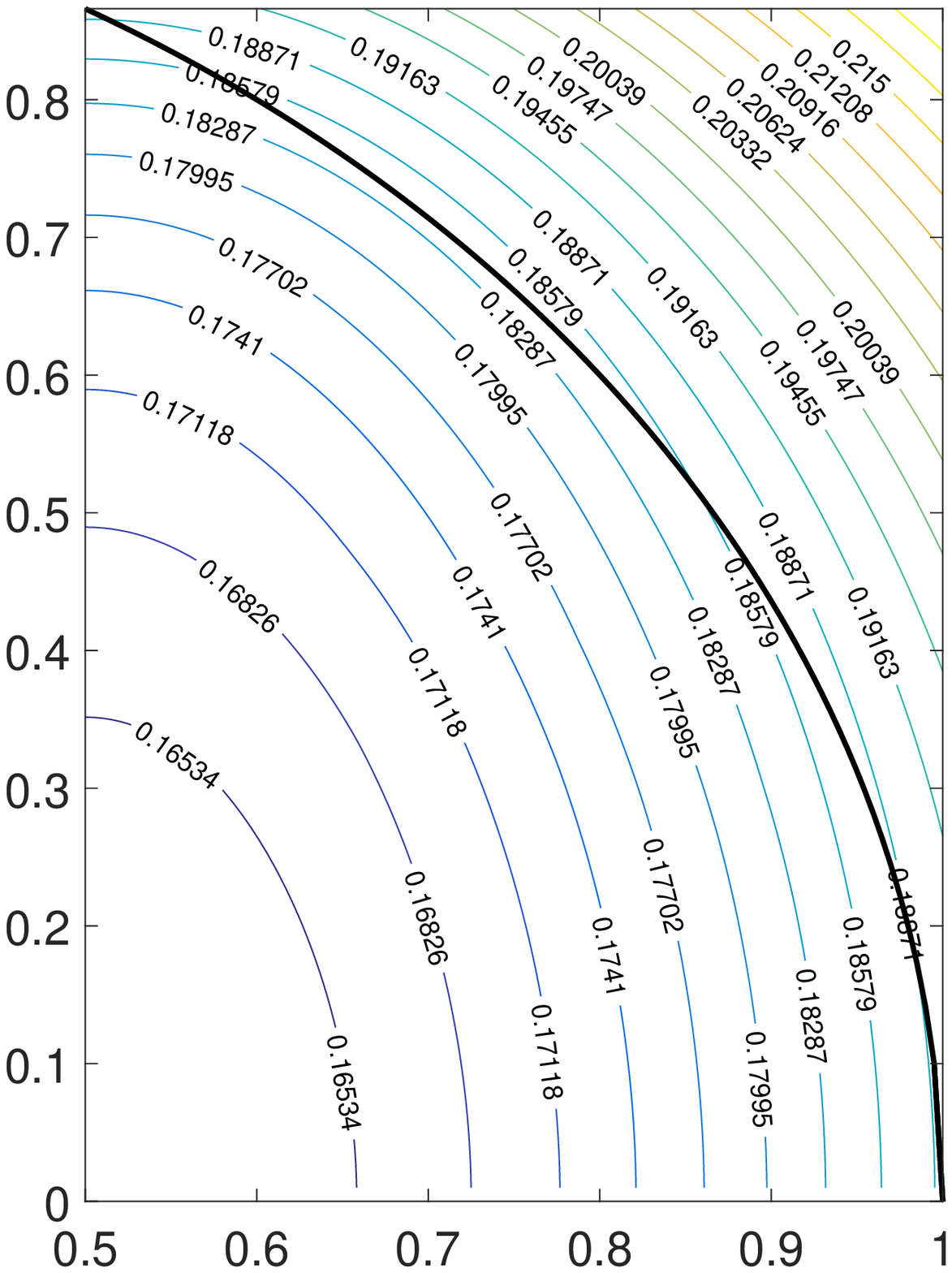}}
   
   \subfigure{\includegraphics[height=65mm]{./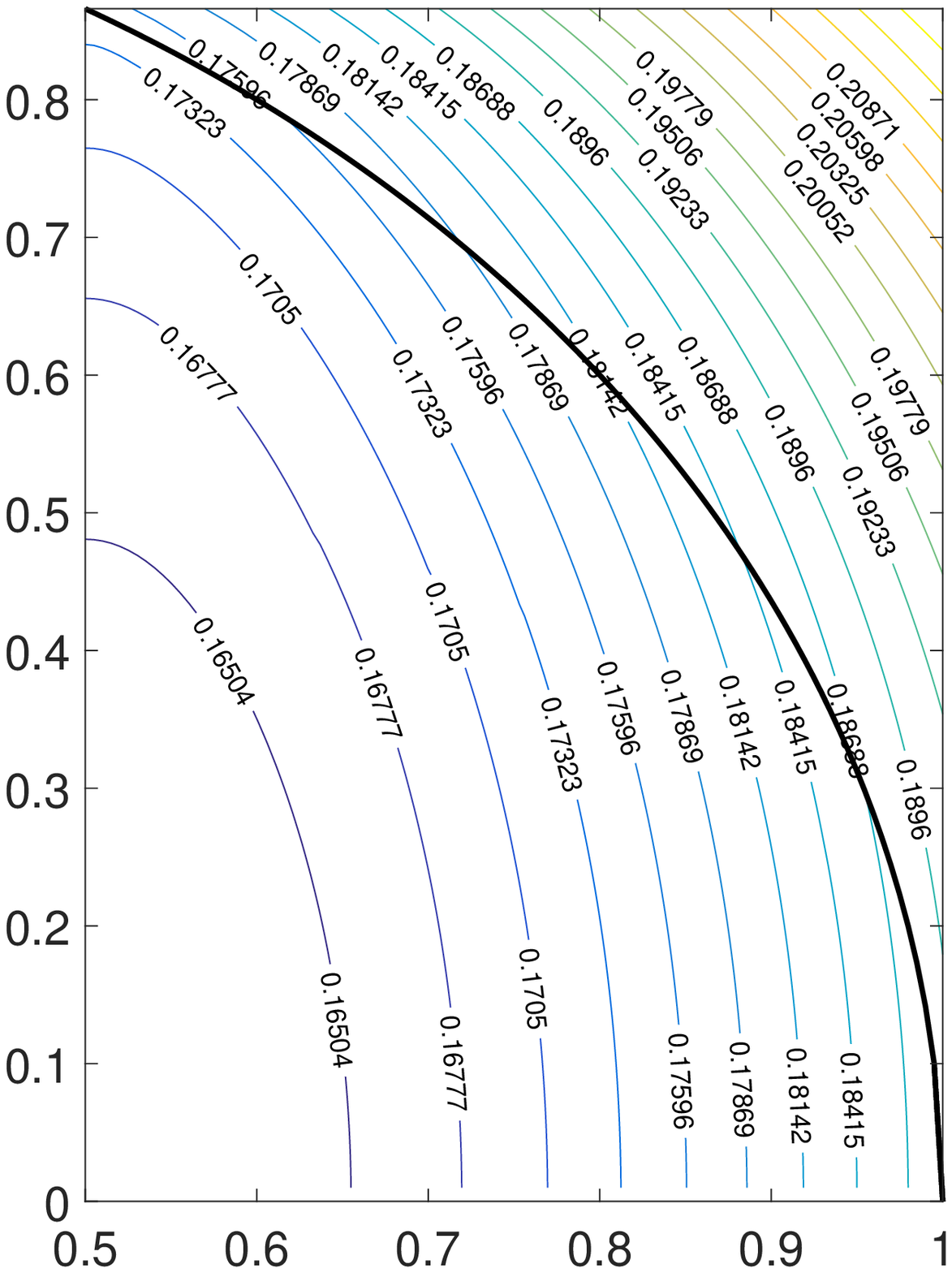}} 
       \end{tabular}}\end{center}
\caption{\label{fig:C_CR_C_1_contour}Contour lines of $C_{\CR}$(Left) and $C_1$(Right).}
\end{figure}

\begin{enumerate}
\item
  The maximum value of \(C_1(K)\approx 0.18868\) happens when
  \(B \) tends to \( (1,0)\) 

\item The maximum value of \(C_{\CR}(K)\approx0.18932\) happens at \(B=(1/2,\sqrt{3}/2)\).

\item \(C_{\CR}(K)\approx0.18868\) when $B$ tends to  $(1,0)$.

\item The constant $C_{\CR}(K)$ gives a nice bound (in the sense that the value of $C_{\CR}(K)$ is less than $0.18868$) for $C_1(K)$ when $B$ is below $y=\sin(0.98\pi/3)$.

\end{enumerate}

The above properties of $C_1(K)$ implies the optimal estimate value of \(C_1\) can be done by following the strategy below:

\begin{enumerate}
    \item [Step 1.] Separate the region $\Omega$ into parts $\Omega_1$ and $\Omega_2$ as follows(see Fig. \ref{fig:Omega_rough_sepa}):
              \[
              \Omega_1=\{(x,y)\in\Omega~|~ y\ge\sin(0.98\pi/3)\};\quad \Omega_2=\{(x,y)\in\Omega~|~y\le\sin(0.98\pi/3) \}.
              \]
        \begin{figure}[htbp]
        \begin{center}
        \includegraphics[scale=0.45]{./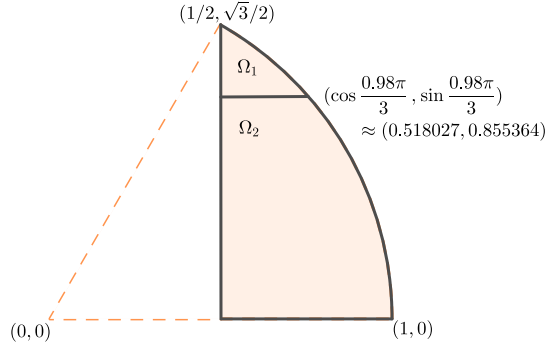}
        \caption{\label{fig:Omega_rough_sepa}Separating $\Omega$ into two parts $\Omega_1$ and $\Omega_2$}
        \end{center}
        \end{figure}
 
    \item [Step 2.] In \(\Omega_1\), by evaluating the constant $C_1$ for several position of \(B\) and apply the perturbation result in Theorem \ref{C_1_2}, we will have 
                         \begin{equation}\label{C_1_Omega1}
                          \sup_{B\in\Omega_1}C_1(K)\le 0.1847    \:.
                         \end{equation}
    \item [Step 3.] In $\Omega_2$, the upper bound of $C_1(K)$ will be obtained through $C_{\CR}(K)$,
                         \begin{equation}\label{C_CR_Omega2}
                         \sup_{B\in\Omega_2}C_1(K)\le\sup_{B\in\Omega_2}C_{\CR}(K)\le0.18868 \:.
                         \end{equation}
\end{enumerate}

Below, we show the details to obtain (\ref{C_1_Omega1}) and (\ref{C_CR_Omega2}).
\paragraph{Estimation of $C_1$ in $\Omega_1$} 
Let $K_0$ be the triangle with vertices at $(0,0)$, $(1,0)$, $\displaystyle B_0=(1/2,\sqrt{3}/2)$. 
Evaluation of upper bound of $C_1$ tells that $\displaystyle C_1(K_0) < 0.1744$. Notice that for all $x,y\in\Omega_1$, $|x-x_{B_0}|\le 0.021y_{B_0}$ and $|y-y_{B_0}|\le 0.013 y_{B_0}$. 
By applying estimation (\ref{eq:perb-est-2}) in Theorem \ref{C_1_2} with $\alpha=0.021$ and $\epsilon=0.013$, we have
\begin{equation}\label{eq:omega1_est}
\sup_{B\in\Omega_1}C_1(K) < 1.059\cdot C_1(K_0) < 0.1847\:.
\end{equation}

\paragraph{Estimation of $C_1$ in $\Omega_2$} Numerical results implies $C_1(K)$ and $C_{\CR}(K)$ have the same supremum on $\Omega_2$, 
which is reached when $B$ tends to $(1,0)$. Since $C_1(K)\le C_{\CR}(K)$ holds strictly, we just focus on $C_{\CR}(K)$. Due to the monotonicity of $C_{\CR}(K)$ on $y_B$, the maximum value of $C_{\CR}(K)$ is taken on either of the following two boundaries.
$$\Gamma_1=\Omega_1\cap\Omega_2;\quad \Gamma_2=\{(\cos\theta,\sin\theta)\in\mathbb{R}^2~|~0<\theta\le0.98\pi/3\}\:.$$

\begin{itemize}
    \item On $\Gamma_1$, we subdivide $\Gamma_1$ into small closed intervals:
$$
I_i=\left\{\left(x,\sin\frac{0.98\pi}{3}\right) \in \Gamma_1 ~|~ x \in \left[\frac{1}{2}+(i-1)h, \frac{1}{2}+ih \right]\right\},\quad i=1,\ldots,20\:. $$
Here, $\displaystyle{h=\frac{1}{20}\left(\cos\left(\frac{0.98\pi}{3}\right)-\frac{1}{2}\right)}$.

\begin{figure}[htbp]
 \begin{center}
 \includegraphics[scale=0.25]{./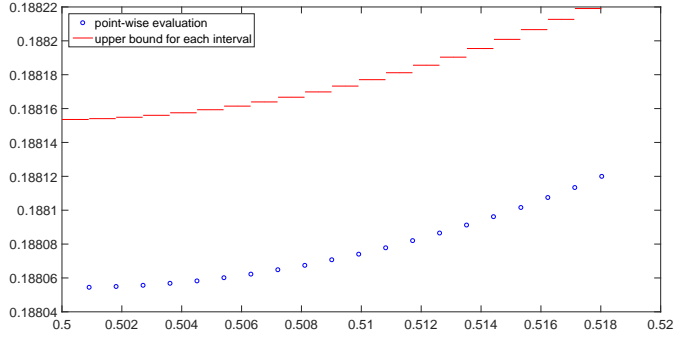}
 \caption{\label{fig:C_CR_upperbound_gamma1}Point-wise evaluation of $C_{\CR}(K)$ and upper bound of $C_{\CR}(K)$ on each sub-interval $I_i$.}
 \end{center}
 \end{figure}
The estimation of $C_{\CR}(K)$ on each $I_i$ is displayed in Fig. \ref{fig:C_CR_upperbound_gamma1}. 
The circles in Fig. \ref{fig:C_CR_upperbound_gamma1} denote the point-wise evaluation of $C_{\CR}(K)$ and the short bars denote the upper bound of $C_{\CR}(K)$ based on Theorem \ref{C_1_1}. 
The computation results tell that
\begin{equation}\label{eq:gamma1_est}
\sup_{B\in\Gamma_1}C_{\CR}(K)< 0.18822\:.
\end{equation}

\item To estimation of $C_{\CR}$ on $\Gamma_2$, we take the subdivision of $\theta$ as follows
$$\theta_i=0.01\times i\times\pi/3~.$$
Define $\tau_1=\theta_1$, $\tau_i=\theta_i-\theta_{i-1}\quad(i=2,\ldots,98)$. From the estimation in Theorem 4.2 of Liu\cite{Liu2015AMC}, 
we can estimate $C_{\CR}(K)$ for $\theta$ in each interval $(0,\theta_1]$ and $(\theta_{i-1},\theta_i]$ $(i=1,2,\ldots,98)$. 
In Fig. \ref{fig:C_CR_upperbound_gamma2}, the point wise estimation of $C_{\CR}(K)$ on $\theta_i$ and the upper bound of $C_{\CR}(K)$ for each interval are displayed. 
Computation results indicate that the supremum of $C_{\CR}(K)$ is reached when $B$ tends to $(1,0)$. We have the following strict bound of $C_{\CR}(K)$ 
\begin{eqnarray}\label{eq:gamma2_est}
\sup_{B\in\Gamma_2}C_{\CR}(K)< 0.18868\:.
\end{eqnarray}
\end{itemize}

\begin{figure}[htbp]
 \begin{center}
 \includegraphics[scale=0.25]{./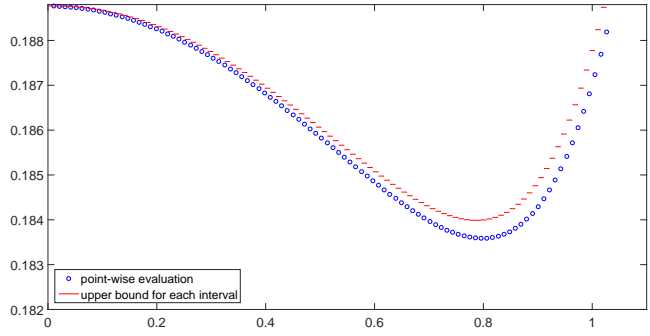}
 \caption{\label{fig:C_CR_upperbound_gamma2}Point-wise evaluation of $C_{\CR}(K)$ and upper bound of $C_{\CR}(K)$ on each sub-interval $(\theta_i-\tau_i,\theta_i)$.} 
 \end{center}
 \end{figure}

The estimation (\ref{eq:omega1_est}), (\ref{eq:gamma1_est}) and (\ref{eq:gamma2_est}) validate the results in (\ref{C_1_Omega1}) and (\ref{C_CR_Omega2}).  Thus, we can draw the conclusion that 
$$
\sup_{B\in\Omega}C_1(K) < 0.18868\:.
$$

\begin{remark}

The asymptotic value of $C_1$ when the vertex $B=(a,b)$ of $K$ tends to $(1,0)$ can be determined by theoretical analysis. 
Below is a sketch of determining $\lim_{\theta\to 0} C_1(K)$.  With the analogous argument as in Theorem 5 of \cite{LIU-Kikuchi}, 
the eigenvalue $\lambda$ corresponding to $C_1$ is determined by solving the following eigenvalue problem on one dimensional interval (0,1),
$$
xf_{xx} + f_x - \lambda x f =C, \quad  f(1)=0,\quad  \int_0^1 fdx=0\:.
$$
The general solution of this ODE  is given by utilizing hypergeometric function,
$$
f(x) = \widetilde{C}  J_0\left(\sqrt{\lambda}x\right) - C ~ _1F_2 \left(1;3/2,3/2; -\frac{\lambda x^2}{4}\right)~x\:.
$$
Here, $J_0$ is the $0$-th order Bessel function of the first kind and $~_1F_2, ~_2F_3$ are hypergeometric functions. 
By further applying the two boundary conditions, one can obtain the following equation with $\lambda$.
$$
_1F_2  \left(\frac{1}{2};1,\frac{3}{2};-\frac{x}{4}\right) \cdot ~ _1F_2  \left(1;\frac{3}{2},\frac{3}{2};-\frac{2}{4}\right) - \frac{1}{2} ~_2F_3  \left(1,1;\frac{3}{2},\frac{3}{2},2;-\frac{x}{4}\right) \cdot J_0\left(\sqrt{x}\right) =0
$$
The above equation has infinite solutions and simple computation tells that the smallest one, denoted by $\lambda_1$, is given by
$$
\lambda_1 = 28.10146739\cdots, \quad  1/\sqrt{\lambda_1} = 0.1886407440\cdots \:.
$$
Thus, $C_1(K)$ converges to $\displaystyle C_1=\frac{1}{\sqrt{\lambda_1}}=0.1886407440\cdots$ when $B$ tends to $(1,0)$. 
\end{remark}

\subsection{Lower bound of constants}\label{header-n843}
To confirm the precision of obtained estimation for $C_0$ and $C_1$, we also calculate the lower bounds of constants. 
The task to provide  lower bound is easy compared with the upper bound estimation. By evaluating $C_0(K)$ and $C_1(K)$ for a concrete element $K$ with conforming spaces, 
then we can have the lower bound for optimal constants.\\

Given a triangle $K$, the conforming finite dimensional space $W^m(\subset V_0)$ over $K$ can be constructed by using polynomials. 
$$
W^m:=\left\{ p \in P^m(K)\:|\: 
p(O)=p(A)=p(B)=0,
\int_{e_i}\frac{\partial p}{\partial n}\md s=0,\quad i=1,2,3.
\right\}\:.
$$
Here, $P^m(K)$ denotes the space of polynomial over $K$ with degree up to $m$.

For $C_0$, we choose $K$ as the unit regular triangle and solve Problem a) in $W^7$. Numerical computation tells that
$$C_0 \ge 0.073499\:.$$
Similarly, for $C_1$, by taking the triangle with $B=(\cos 0.01,\sin 0.01)$ and solving Problem b) in $W^6$, we have 
$$C_1 \ge 0.188638\:.$$

\section{Application to eigenvalue problems of Biharmonic operators}\label{header-n842}
Let us apply the two fundamental constants $C_0$ and $C_1$  to estimate the 
error constants appearing in  the Lagrange interpolation and the Fujino--Morley interpolation.\\

\paragraph{The Lagrange interpolation error constants} Given triangle element $K$ with vertices $P_1(0,0),$ $P_2(1,0)$  and $P_3(a,b)$. 
Let  $\Pi_1$ be the Lagrange interpolation over $T$ such that, for $v \in H^2(K)$,  $\Pi_1v$ is a linear polynomial  and $(v-\Pi_1v)(P_i)=0$, $i=1,2,3$. \\

Define subspace $V$ of $H^2(K)$ by
$$
V_0(K):=\{v\in H^2(K) \:|\: v(P_i)=0,\quad i=1,2,3\}\:.
$$
Define constants $C_{L,0}(K), C_{L,1}(K)$ by
$$
C_{L,0} (K):=\sup_{\scriptsize{\begin{array}{c}
    v\in H^2(K) \\
     D^2v\neq0 
\end{array}}}\frac{\|v-\Pi_1v\|}{\|D^2v\|}~(=\sup_{\scriptsize{\begin{array}{c}
     v\in V_0(K)  \\
     D^2v\neq0 
\end{array}}}\frac{\|v\|}{\|D^2v\|}),
$$
$$
 \displaystyle C_{L,1}(K):=\sup_{\scriptsize{\begin{array}{c}
    v\in H^2(K) \\
     D^2v\neq0 
\end{array}}}\frac{\| \nabla(v-\Pi_1v)\|}{\|D^2v\|} ~(=\sup_{\scriptsize{\begin{array}{c}
     v\in V_0(K)  \\
     D^2v\neq0 
\end{array}}}\frac{\|\nabla v\|}{\|D^2v\|})\:.
$$
Thus, the interpolation error estimation of $\Pi_1$ can be given as 
$$
\| v -\Pi_1 v\| \le C_{L,0} \|D^2v\|, \quad  \| \nabla(v -\Pi_1 v)\| \le C_{L,1} \|D^2v\|\:.
$$
The constants are determined by solving the following eigenvalue problems: \\
\textbf{Problem c)}  Find $u\in V_0(K)$ and $\eta>0$ such that
$$
(D^2u,D^2v)=\eta(u,v)\quad\forall v\in V_0(K) \:.
$$
\textbf{Problem d)}  Find $u\in V_0(K)$ and $\mu>0$ such that
$$
(D^2u,D^2v)=\mu (\nabla u, \nabla v)\quad\forall v\in V_0(K) \:.
$$
Denote the smallest eigenvalue of each problem by $\eta_1$ and $\mu_1$, respectively. Then $C_{L,0}=1/\sqrt{\eta_1}$ and $C_{L,1}=1/\sqrt{\mu_1}$.   
Choose the subspace of Fujino--Morley finite element space $V^h$ defined over triangulation of $K$,
$$
V^{h}_0(K)=\{v_h\in V^{h}~|~v_h(P_i)=0,\quad i=1,2,3\}\:.
$$
Let $\eta_{h,1}$ and $\mu_{h,1}$ be the smallest eigenvalues of Problem c) and d), respectively, with $V_0(K)$ replaced by $V_0^h(K)$.

Let us take the following setting for Theorem 2.1 of \cite{Liu2015AMC}, which is similar to Theorem \ref{Liu_thm}, 
\begin{eqnarray*}
V(h):=\{u+u_h~|~u\in V_0,~u_h\in V^h_0\},\quad M(u,v):=(D^2u,D^2v) , \\
N(u,v):=(u,v)~\text{for}~C_{L,0},\quad N(u,v):=(\nabla u,\nabla v)~\text{for}~C_{L,1} \:.
\end{eqnarray*}
With the newly obtained error constant estimation in \S\ref{header-n753}, we obtain the lower bounds of $\eta_1$ and $\mu_1$ like (\ref{framework lower bounds_1}). 
$$
\eta_1\ge\frac{\eta_{h,1}}{1+(0.07353h^2)^2 \eta_{h,1}},
\quad \mu_1\ge\frac{\mu_{h,1}}{1+(0.18868h)^2\mu_{h,1}}\:.
$$

\paragraph{The Fujino--Morley interpolation error constants} 
For the Fujino--Morley interpolation $\Pi^{\FM}$, let us define the following constants
$$
C_{\FM,0} (K)^{-2}= 
\eta (K)=\min_{\scriptsize{\begin{array}{c}
     v\in H^2(K)  \\
     v-\Pi^{\FM} v \not = 0 
\end{array}}} \frac{\|D^2v\|^2}{\|v-\Pi^{\FM}v\|^2}
$$
$$
C_{\FM,1} (K)^{-2}=
\mu (K)=\min_{\scriptsize{\begin{array}{c}
     v\in H^2(K)  \\
     \nabla(v-\Pi^{\FM}v)\not = 0 
\end{array}}}\frac{\|D^2v\|^2}{\| \nabla (v-\Pi^{\FM}v)\|^2}\:.
$$
Notice that the minimizer function for $\eta$ and $\mu$ are both orthogonal to all $P_2$ polynomial functions with respect to $(D^2\cdot, D^2 \cdot)$. 
To see this, one can take the perturbation of the minimizer function with respect to any function in $P_2(K)$.

Let us introduce the space $U_0(K)$ by 
$$
U_0(K):=\{v \in H^2(K)~ |~ (D^2 v, D^2 p)=0, \forall p \in P^2(K) \}
$$
Then $\eta$ and $\mu$ can be characterized by Rayleigh quotients over $U_0(K)$,
$$
\eta (K)=\min_{\scriptsize{\begin{array}{c}
     v\in U_0(K)  \\
     v-\Pi^{\FM} v \not = 0 
\end{array}}} \frac{\|D^2v\|^2}{\|v-\Pi^{\FM}v\|^2},
\quad 
\mu (K)=\min_{\scriptsize{\begin{array}{c}
     v\in U_0(K)  \\
     \nabla(v-\Pi^{\FM}v)\not = 0 
\end{array}}} \frac{\|D^2v\|^2}{\| \nabla(v-\Pi^{\FM}v)\|^2}\:.
$$
Notice that $(D^2\cdot, D^2 \cdot)$ and $((I-\Pi^{FM})\cdot, (I-\Pi^{FM})\cdot  )$
are both positive definite bilinear forms on $U_0(K)$. Thus, we can apply
Theorem 2.1 of \cite{Liu2015AMC} to solve corresponding eigenvalue problems.\\

\paragraph{Concrete values of interpolation error constants}
 In Table \ref{tab:LB_1} and \ref{tab:LB_2}, 
we list the estimation of upper bounds of $C_{L,i}$ and $C_{{\FM},i}$ for different shapes of triangle $K$. 
The underline in tables tells that the lower bound and upper bound evaluation of the constants agree with each other at the underlined digits.

\begin{remark}
The estimation of constants considered in this paper only concerns the largest edge length of a triangle element.
By utilizing more geometric information of the element, i.e., the inner angle size and each edge length, 
one can have better upper bounds of interpolation constants over domain of different shapes; 
see the work of Liu-Kikuchi \cite{Kikuchi+Liu2007,LIU-Kikuchi} and Kobayashi \cite{Kobayashi-2015-CM}.
\end{remark}

\begin{table}[htbp]
    \centering
    \caption{Error constants $C_{L,0}$ and $C_{L,1}$ for the Lagrange interpolation}
    \label{tab:LB_1}
    \begin{tabular}{|p{3cm}||p{2.2cm}|p{2.2cm}| }
 \hline
$(a,b)$ &$C_{L,0}$  & $C_{L,1}$\\
 \hline
$(0,1)$&\underline{0.167}349 &\underline{0.4887}67 \\
$(1/2,\sqrt3/2)$&\underline{0.117}134 &\underline{0.3184}57\\
$(-\sqrt2/2,\sqrt2/2)$&\underline{0.245}388 &\underline{1.1879}98\\
$(0,0.1)$&\underline{0.108}221 &\underline{0.327}955\\
 \hline
\end{tabular}
\end{table}

\begin{table}[htbp]
    \centering
        \caption{Error constants $C_{\FM,0}$ and $C_{\FM,1}$ for the Fujino--Morley interpolation}
    \label{tab:LB_2}
    \begin{tabular}{|p{3cm}||p{2.2cm}|p{2.2cm}| }
 \hline
$(a,b)$ &$C_{\FM,0}$  & $C_{\FM,1}$\\
 \hline
$(0,1)$&\underline{0.090}287 &\underline{0.233}708 \\
$(1/2,\sqrt3/2)$&\underline{0.073}583 &\underline{0.1743}54\\
$(-\sqrt2/2,\sqrt2/2)$&\underline{0.093}318 &\underline{0.300}773\\
$(0,0.1)$&\underline{0.060}474 &\underline{0.188}918\\
 \hline
\end{tabular}

\end{table}

\section{Summary}\label{ch_sum}
In this paper, we provide optimal estimation of two important error constants in bounding eigenvalues of bi-harmonic operators. 
As application of obtained estimation of the constants, the upper bounds for error constants of two interpolation operators are evaluated. 
Moreover, the algorithm proposed here along with the explicit constant values can be further used to
give rigorous bounds for the eigenvalues of general bi-harmonic differential operators.

\begin{acknowledgements}
The authors would like to thank for the support from the Ministry of Science and Technology, Taiwan, R.O.C. under Grant no. MOST 107-2911-M-006-506.
This research is also supported by Japan Society for the Promotion of Science, 
Grand-in-Aid for Young Scientist (B) 26800090,
Grant-in-Aid for Scientific Research (C) 18K03411 and 	
Grant-in-Aid for Scientific Research (B) 16H03950 for the third author.

\end{acknowledgements}

% BibTeX users please use one of
%\bibliographystyle{spbasic}      % basic style, author-year citations
\bibliographystyle{spmpsci}      % mathematics and physical sciences

\bibliography{reference} % name your BibTeX data base

% Non-BibTeX users please use
%\begin{thebibliography}{}
%
% and use \bibitem to create references. Consult the Instructions
% for authors for reference list style.
%
%\bibitem{RefJ}
% Format for Journal Reference
%Author, Article title, Journal, Volume, page numbers (year)
% Format for books
%\bibitem{RefB}
%Author, Book title, page numbers. Publisher, place (year)
% etc
%\end{thebibliography}

\end{document}